\documentclass[a4paper,14pt]{article}
\thispagestyle{empty} \pagestyle{myheadings} \markboth{\it}{\it}
\hbadness 10000 \vbadness 10000 \tolerance 100000 \textheight 21cm
\textwidth 14cm
\def\ds{\displaystyle}
\def\ni{\noindent}
\usepackage[a4paper, total={6in, 8in}]{geometry}
\usepackage{setspace}
\usepackage{amssymb}
\usepackage[utf8]{inputenc}
\usepackage[english]{babel}
\usepackage{amsthm}
\usepackage{amsmath}
\allowdisplaybreaks[1]

\newtheorem{theorem}{Theorem} [section]
\newtheorem{lemma}{Lemma} [section]

\numberwithin{equation}{section}

\begin{document}
\begin{center}
{\bf \Large Some congruences for $(s,t)$-regular bipartitions modulo $t$}
\end{center}
\begin{center}
\bf T. Kathiravan\footnote[1]{Email: kkathiravan98@gmail.com}%\\The author research is supported by NBHM, Post doctoral Fellowship, Government of India, File no: 0204/18/2018/R\&D-II/6451 }
\bf and
\bf K. Srilakshmi\footnote[2]{srilakshmi@iisertvm.ac.in }
\end{center}
{\baselineskip .5cm \begin{center}\footnotemark[1]\footnotemark[2] School of Mathematics,\\ Indian Institute of Science Education and Research,\\ Thiruvananthapuram-695 551, Kerala,\\India.\\
\end{center}}
\begin{center} \footnotemark[1]The Institute of Mathematical Sciences,\\ CIT Campus, Taramani,\\ Chennai 600113, India\\
\end{center}
\ni {\bf \small Abstract:}
\par In this work, we study the function $B_{s,t}(n)$, which counts the number of $(s,t)$-regular bipartitions of $n$. 
Recently, many authors proved infinite families of congruences modulo $11$ for $B_{3,11}(n)$, modulo $3$ for $B_{3,s}(n)$ and modulo $5$ for $B_{5,s}(n)$.
Very recently, Kathiravan proved several infinite families of congruences modulo $11$, $13$ and $17$ for $B_{5,11}(n)$, $B_{5,13}(n)$ and $B_{81,17}(n)$. 
In this paper, we will prove infinite families of congruences modulo $5$ for $B_{2,15}(n)$, modulo $11$ for $B_{7,11}(n)$, modulo $11$  for $B_{27,11}(n)$ and modulo $17$ for $B_{243,17}(n)$ . \\

\ni {\bf \small 2010 Mathematics Subject Classification:} 11P83, 05A17.\\
\ni {\bf \small Keywords:} Congruence, Regular Bipartition.

\section{Introduction}
For $n$ a positive integer, a partition of $n$ is a non-increasing sequence of positive integers whose sum is $n$. The number of partitions of $n$ is denoted by $p(n)$. The generating function for $p(n)$, is given by
\begin{equation}\label{s1}
\sum_{n=0}^{\infty}p(n)q^n=\frac{1}{(q;q)_\infty}.
\end{equation}
For a nonzero integer $k$, we define the general partition function $p_k(n)$ as the coefficient of $q^n$ in the expansion of $(q;q)^k_\infty$. If $k=-1$ we have usual partition function $p(n)$.
The generating function for $p_k(n)$, is given by
\begin{equation}\label{1w}
\ds\sum_{n=0}^{\infty}p_k(n)q^n=(q;q)^k_\infty,
\end{equation}
where as customary, we define
\begin{equation}\nonumber
f_k:=(q^k;q^k)_\infty=\prod^\infty_{m=1}(1-q^{mk}).
\end{equation}
In \cite{Ramanujan,ramanujan2000collected} Ramanujan obtained the beautiful identities
\begin{equation}\label{j1}
\ds\sum_{n=0}^{\infty}p(5n+4)q^n=5\frac{f^5_5}{f^6_1}
\end{equation}
and
\begin{equation}\label{j2}
\ds\sum_{n=0}^{\infty}p(7n+5)q^n=7\frac{f^3_7}{f^4_1}+49q\frac{f^7_7}{f^8_1}.
\end{equation}
Ramanujan \cite{Ramanujan} give a brief of the identities \eqref{j1}, he did not prove the identities \eqref{j2} in \cite{Ramanujan}, but \cite{Ramanujan1} he did give a sketch of his proof of identities \eqref{j2} in his unpublished manuscript of the partition and $\tau$- function. Note that \eqref{j1} and \eqref{j2} immediately yield the congruences $p(5n+4)\equiv0\pmod5$ and $p(7n+5)\equiv0\pmod7.$\\
Ramanujan partition congruences motivated an investigation of many classes of partitions, such as $\ell$-regular partitions. For a positive integer $\ell$, a partition is said to be $\ell$-regular if none of its parts is divisible by $\ell$. If $b_\ell(n)$ denote the number of $\ell$-regular partitions of $n$, then the generating function for $b_\ell(n)$, is given by
 \begin{equation}\label{0.a}
\sum_{n=0}^{\infty}b_{\ell}(n)q^n=\frac{f_\ell}{f_1}.
\end{equation}
In recent years, many authors studied arithmetic properties of $\ell$-regular partitions \cite{Carlson, Cui, Dandurand, Furcy, Gordon, Hirschhorn, Webb, Xia, Yao}.\\
Recall that, for a positive integers $s>1$ and $t>1$, a bipartition $(\lambda,\mu)$ of $n$ is a pair of partitions $(\lambda,\mu)$ such that the sum of all the parts equals $n$. A $(s,t)$-regular bipartition of $n$ is a bipartition $(\lambda,\mu)$ of $n$ such that $\lambda$ is a $s$-regular partition and $\mu$ is a $t$-regular partition. If $B_{s,t}(n)$ denote the number of $(s,t)$-regular bipartitions of $n$, then the generating function $B_{s,t}(n)$, is given by
\begin{equation}\label{0.1}
\sum_{n=0}^{\infty}B_{s,t}(n)q^n=\frac{f_sf_t}{f^2_1}.
\end{equation}
Recently, Lin \cite{Lin1} proved infinite families of congruence modulo $3$ for $B_7(n)$, by using Ramanujan's two modular equation of degree $7$, and in \cite{Lin2}, he proved infinite families of congruence modulo $3$ for $B_{13}(n)$. For more related works, see \cite{Kathiravan, Wang}.\\
Very recently ,\cite{Dou} Dou proved that, for $n\geq0$ and $\alpha\geq2$,
\begin{equation*}
B_{3,11}\left(3^\alpha n+\frac{5\cdot3^{\alpha-1}-1}{2}\right)\equiv0\pmod{11}.
\end{equation*}
Adiga and  Ranganatha \cite{Adiaga} proved infinite families of congruences modulo $3$ for $B_{3,7}(n)$ and Xia and Yao \cite{Xia1} proved several infinite families of congruences modulo $3$ for $B_{3,s}(n)$, modulo $5$ for $B_{5,s}(n)$ and modulo $7$ for $B_{3,7}(n)$. For example,
let $s$ be a positive integer and let $p\geq5$ be a prime, for $n\geq0$,
\begin{equation*}
B_{3,s}\left(p^{2\alpha+1}n+\frac{(1+s)(p^{2\alpha+2}-1)}{24}\right)\equiv0\pmod3.
\end{equation*}
Very Recently, Kathiravan \cite{Kathiravan1} proved several infinite families of congruences modulo $11$, $13$ and $17$ for $B_{5,11}(n)$, $B_{5,13}(n)$ and $B_{81,17}(n)$. For example, for all $n\geq0$ and $m\geq0$,
\begin{equation*}
B_{5,13}\left(5^{6m+5}(5n+k)+\frac{5^{6m+5}-2}{3}\right)\equiv0\pmod{13},~~\textrm{where}~~k=1,5.
\end{equation*}
In this paper, we will prove several infinite families of congruences modulo $5$, $11$, $11$ and $17$ for $B_{2,15}(n)$, $B_{7,11}(n)$, $B_{27,11}(n)$ and $B_{243,17}(n)$. The main results of this paper are as follows,
\begin{theorem}\label{y}\normalfont For all $n\geq0$ and $m\geq0$,
\begin{eqnarray}
B_{2,15}\left(3^{2m+1}n+\frac{7\cdot3^{2m+1}-5}{8}\right)&\equiv&2^mB_{2,15}(3n+2)\pmod{5}\label{y1},\\
B_{2,15}\left(3^{2m+2}n+\frac{23\cdot3^{2m+1}-5}{8}\right)&\equiv&0\pmod{5}\label{y2},\\
B_{2,15}\left(3^{2(m+1)+1}n+\frac{13\cdot3^{2(m+1)}-5}{8}\right)&\equiv&0\pmod{5}\label{y3}.
\end{eqnarray}
\end{theorem}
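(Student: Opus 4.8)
The plan is to work modulo $5$ throughout and to reduce the entire theorem to three ``seed'' congruences for $B_{2,15}(3n+2)$, which are then fed into an induction. First I would reduce the generating function \eqref{0.1}. Since $(1-x)^5\equiv 1-x^5\pmod 5$ gives $f_k^5\equiv f_{5k}\pmod5$, we have $f_{15}\equiv f_3^5$ and $f_1^5\equiv f_5$, so
$$\sum_{n\ge0}B_{2,15}(n)q^n=\frac{f_2f_{15}}{f_1^2}\equiv f_3^5\,\frac{f_2}{f_1^2}\pmod5.$$
Because $f_3^5$ is a power series in $q^3$, extracting the residue class $3n+2$ only involves the $3n+2$ part of the overpartition series $f_2/f_1^2$. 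Writing $f_2/f_1^2=P_0(q^3)+qP_1(q^3)+q^2P_2(q^3)$ for its $3$-dissection and setting $A(q):=\sum_{n\ge0}B_{2,15}(3n+2)q^n$, the substitution $q^3\mapsto q$ yields
$$A(q)\equiv f_1^5\,P_2(q)\equiv f_5\,P_2(q)\pmod5.$$

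Next I would set up the arithmetic bookkeeping that drives the induction. Put $c_m:=\frac{7\cdot3^{2m+1}-5}{8}$, so $c_0=2$ and $c_{m+1}=9c_m+5$; consequently $3^{2m+3}n+c_{m+1}=3^{2m+1}(9n+7)+c_m$. Hence, with $g_m(q):=\sum_{n\ge0}B_{2,15}(3^{2m+1}n+c_m)q^n$, one has $g_0=A$ and $g_{m+1}(q)=\sum_{n}\big([q^{9n+7}]g_m\big)q^n$, i.e. $g_{m+1}$ is obtained from $g_m$ by extracting the progression $9n+7$. Extraction is linear and commutes with reduction mod $5$, so the claim \eqref{y1}, which is exactly $g_m\equiv 2^mA\pmod5$, follows by induction from the single relation
$$(\star)\qquad \sum_{n\ge0}B_{2,15}(27n+23)q^n\equiv 2A(q)\pmod5.$$
For the remaining two parts I would use the companion index identities
$$3^{2m+2}n+\tfrac{23\cdot3^{2m+1}-5}{8}=3^{2m+1}(3n+2)+c_m,\qquad 3^{2m+3}n+\tfrac{13\cdot3^{2m+2}-5}{8}=3^{2m+1}(9n+4)+c_m,$$
which show that \eqref{y2} and \eqref{y3} are the extractions of the progressions $3n+2$ and $9n+4$ from $g_m\equiv2^mA$. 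Thus they reduce, respectively, to $B_{2,15}(9n+8)\equiv0\pmod5$ and $B_{2,15}(27n+14)\equiv0\pmod5$, which we call $(\star\star)$ and $(\star\star\star)$.

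All three seeds are statements about the $9$-dissection of $A$: in the notation $A=\sum_{i=0}^{8}q^iA_i(q^9)$ they read $A_7\equiv2A$, $A_2\equiv A_5\equiv A_8\equiv0$ (the class $2\bmod3$), and $A_4\equiv0$. To prove them I would $3$-dissect $A\equiv f_5P_2$ once to show that its residue class $2\bmod3$ vanishes mod $5$ (giving $(\star\star)$), and then $3$-dissect the residue class $1\bmod3$ part of $A$ a second time: its $2\bmod3$ sub-part should reproduce $2A$ (giving $(\star)$) and its $1\bmod3$ sub-part should vanish (giving $(\star\star\star)$). Concretely this uses the known $3$-dissections of $1/f_1$, of $f_1$ (hence of $f_1^5\equiv f_5$) and of $f_2/f_1^2$, together with repeated use of $f_k^5\equiv f_{5k}\pmod5$ to collapse fifth powers.

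The main obstacle is the eigenvalue relation $(\star)$: one must show that the second $3$-dissection does not merely produce some new eta-quotient but reproduces the original series $A$ up to the exact factor $2$ modulo $5$. This forces the various theta-quotients arising from the two dissections to simplify --- after applying $f_k^5\equiv f_{5k}$ --- back to $2f_5P_2$, and it is this self-similar, Hecke-eigenform-like collapse, with eigenvalue $2$ whose powers $2^m$ appear in \eqref{y1}, that is the crux. The two vanishing seeds $(\star\star)$ and $(\star\star\star)$ are easier, following once the relevant components of the dissection are seen to be identically $0\pmod5$.
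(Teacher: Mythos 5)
Your reduction of Theorem \ref{y} to three ``seed'' congruences is correct, and it is structurally identical to what the paper does: the paper's Theorem \ref{d1} is exactly your $(\star)$, $(\star\star)$, $(\star\star\star)$, i.e. $B_{2,15}(27n+23)\equiv 2B_{2,15}(3n+2)\pmod 5$, $B_{2,15}(9n+8)\equiv 0\pmod 5$ and $B_{2,15}(27n+14)\equiv 0\pmod 5$, and the paper then deduces \eqref{y1}--\eqref{y3} by precisely the induction and progression-extraction bookkeeping you spell out (indeed you do this part more carefully than the paper, which compresses it into one sentence). Your opening step $A(q)\equiv f_5P_2(q)\equiv 4f_2^2f_6^3/f_1\pmod 5$ is the paper's \eqref{b3}, obtained from the Hirschhorn--Sellers dissection \eqref{a5}, and your identification of which components of the $9$-dissection of $A$ must vanish and which must reproduce $2A$ is exactly right.

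The genuine gap is that the seeds themselves are never established: you write that the $2\bmod 3$ class of $A$ ``should'' vanish, that the $2\bmod 3$ sub-part of its $1\bmod 3$ class ``should reproduce $2A$'', and that the remaining sub-part ``should vanish'', and you yourself flag the eigenvalue relation $(\star)$ as the main obstacle --- but this is where all of the actual work in the paper lies. Moreover, the tools you cite ($3$-dissections of $1/f_1$ and of $f_1$) are not the ones that make the computation close up. What the paper does is: since $A\equiv 4f_6^3\cdot\bigl(f_2^2/f_1\bigr)\pmod 5$, apply Berndt's two-term dissection \eqref{4w}, $f_2^2/f_1=\frac{f_6f_9^2}{f_3f_{18}}+q\frac{f_{18}^2}{f_9}$ (the theta function $\psi(q)$ has no $q^{3n+2}$ terms), which gives $(\star\star)$ at once and identifies the $1\bmod 3$ class as $4f_2^3f_6^2/f_3$ (this is \eqref{b5}); then dissect $f_2^3=f_6a(q^6)-3q^2f_{18}^3$ (equation \eqref{3k} with $q\mapsto q^2$), which has no $q^{3n+1}$ terms, yielding $(\star\star\star)$, and whose $q^{3n+2}$ part returns $-12\,f_2^2f_6^3/f_1\equiv 3\,f_2^2f_6^3/f_1\pmod 5$ (this is \eqref{b6}); comparing with \eqref{b3} and noting $3\cdot 4^{-1}\equiv 2\pmod 5$ gives $(\star)$. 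Until you carry out such a computation (or an equivalent one with whatever dissection identities you prefer), the self-similar collapse with eigenvalue $2$ remains an expectation rather than a fact, so what you have is a correct and well-organized reduction, not yet a proof.
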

\begin{theorem}\label{a}\normalfont For all $n\geq0$ and $m\geq0$,
\begin{eqnarray}
B_{7,11}\left(7^{12m}n+\frac{2\cdot7^{12m}-2}{3}\right)&\equiv&3^mB_{7,11}(n)\pmod{11}\label{a1},\\
B_{7,11}\left(7^{12m+11}(7n+k)+\frac{2\cdot7^{12m+11}-2}{3}\right)&\equiv&0 \pmod{11},~\textrm{where}~k=1,5,6.\label{a2}
\end{eqnarray}
\end{theorem}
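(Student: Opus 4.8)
The plan is to reduce everything modulo $11$ to a single eta-quotient and then to track the action of the operator that extracts arithmetic progressions with common difference $7$. First, from the generating function \eqref{0.1} and the congruence $f_{11}\equiv f_1^{11}\pmod{11}$ (immediate from $(1-q^m)^{11}\equiv 1-q^{11m}\pmod{11}$), one gets
\begin{equation*}
\sum_{n=0}^{\infty}B_{7,11}(n)q^n=\frac{f_7f_{11}}{f_1^2}\equiv f_1^{9}f_7\pmod{11}.
\end{equation*}
Writing $F:=f_1^9f_7=q^{-2/3}\eta(\tau)^9\eta(7\tau)$, I would reindex by the substitution $q=Q^3$, so that $B_{7,11}(n)$ is recorded as the coefficient of $Q^{3n+2}$ in $F$. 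The point of this bookkeeping is that the shift $\tfrac{2\cdot 7^{12m}-2}{3}=\tfrac{2}{3}(7^{12m}-1)$ in the theorem is exactly what the offset $2/3$ produces under $12m$ applications of the operator $U_7$ (extraction of the $Q^{7N}$-coefficients): a short index computation shows that $B_{7,11}\!\left(7^{a}n+\tfrac{2\cdot 7^{a}-2}{3}\right)$ is precisely the coefficient of $Q^{3n+2}$ in $U_7^{a}F$. Thus the whole theorem becomes a statement about the $U_7$-orbit of $F$ modulo $11$.

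With this reformulation, the core is the single relation
\begin{equation*}
U_7^{12}F\equiv 3F\pmod{11}.
\end{equation*}
To establish it I would first produce an explicit $7$-dissection of $f_1^9$ (equivalently of $F$, since $f_7$ is already a series in $q^7$ and simply returns the factor $f_1$ after $q^7\mapsto q$), using Euler's pentagonal expansion of $f_1$ together with the classical $7$-dissection of Dedekind's eta function; Jacobi's identity $f_1^3=\sum_{k\ge0}(-1)^k(2k+1)q^{k(k+1)/2}$ is convenient here, since triangular numbers occupy only the residues $0,1,3,6\pmod 7$. This expresses $U_7F$ as an explicit $\mathbb{F}_{11}$-linear combination of $F$ and finitely many auxiliary eta-quotients of the same weight and level. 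Collecting $F$ with these auxiliary series into a finite vector, $U_7$ becomes a matrix $M$ over $\mathbb{F}_{11}$, and the displayed relation is equivalent to the finite verification $M^{12}=3I$. Granting it, induction on $m$ (writing $c_m=\tfrac{2\cdot 7^{12m}-2}{3}$ and using $c_{m+1}=7^{12}c_m+c_1$ for the shift) gives $U_7^{12m}F\equiv 3^mF\pmod{11}$, and reading off the $Q^{3n+2}$-coefficients yields \eqref{a1}.

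For \eqref{a2} I would push the orbit one further step and feed in the relation just proved. A direct index computation shows that the coefficient of $Q^{3(7n+k)+2}$ in $U_7^{12m+11}F\equiv 3^m\,U_7^{11}F$ equals $3^{m+1}B_{7,11}(49n+7k+4)$ modulo $11$. Hence \eqref{a2} reduces to the three base congruences
\begin{equation*}
B_{7,11}(49n+11)\equiv B_{7,11}(49n+39)\equiv B_{7,11}(49n+46)\equiv 0\pmod{11},
\end{equation*}
one for each $k\in\{1,5,6\}$. Since $B_{7,11}(49n+7k+4)$ is exactly the $Q^{3(7n+k)+2}$-coefficient of $U_7F$, these follow by $7$-dissecting $U_7F$ once more: the explicit dissection exhibits precisely the three residue classes $k=1,5,6$ as having coefficients $\equiv 0\pmod{11}$.

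The main obstacle is the middle step, proving $U_7^{12}F\equiv 3F\pmod{11}$. Everything hinges on obtaining a clean, closed $7$-dissection of $f_1^9$ modulo $11$ with a manageable number of auxiliary functions, so that the matrix $M$ is small enough to exponentiate by hand and genuinely satisfies $M^{12}=3I$ over $\mathbb{F}_{11}$. Identifying the correct auxiliary eta-quotients — equivalently, pinning down the finite-dimensional space of modular forms modulo $11$ that is stable under $U_7$ and contains $F$ — is the delicate part; once that space and the matrix are in hand, the period-$12$, multiplier-$3$ behavior, the induction for \eqref{a1}, and the vanishing needed for \eqref{a2} all fall out by finite computation.
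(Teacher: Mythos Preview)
Your plan for \eqref{a1} is essentially the paper's argument recast in operator language. The paper works in the three–dimensional $\mathbb{F}_{11}$–space spanned by $f_1^9f_7$, $qf_1^5f_7^5$, $q^2f_1f_7^9$; the action of $U_7$ on this space is computed from the explicit identities
\[
\sum_{n\ge0}p_9(7n+4)q^n=-90f_1^8f_7-882qf_1^4f_7^5-2401q^2f_7^9,\qquad
\sum_{n\ge0}p_5(7n+3)q^n=10f_1^4f_7+49qf_7^5,
\]
together with the $7$–dissection of $f_1$, and the eleven-fold iteration ends at $8q^2f_7^9f_1$, from which one more application of $U_7$ yields $3f_1^9f_7$. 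So your ``find a $U_7$–stable space and check $M^{12}=3I$'' is exactly this computation.

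Your reduction of \eqref{a2}, however, is wrong. From the index formula you yourself state, the $Q^{3(7n+k)+2}$–coefficient of $U_7^{12m+11}F$ is $B_{7,11}\bigl(7^{12m+11}(7n+k)+\tfrac{2\cdot7^{12m+11}-2}{3}\bigr)$, and the periodicity gives
\[
U_7^{12m+11}F\equiv 3^m\,U_7^{11}F,
\]
so the correct base case is at level $7^{11}$:
\[
B_{7,11}\!\left(7^{11}(7n+k)+\frac{2\cdot7^{11}-2}{3}\right)\equiv 0\pmod{11}\qquad(k=1,5,6).
\]
There is no legitimate way to drop from $U_7^{11}F$ down to $U_7F$: the relation $M^{12}=3I$ gives $M^{11}=3M^{-1}$ on the abstract space, but $M^{-1}$ has no interpretation as a coefficient-extraction operator, so the identification of the relevant coefficient with $3B_{7,11}(49n+7k+4)$ is unjustified. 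In fact your proposed base congruence $B_{7,11}(49n+11)\equiv0\pmod{11}$ is false already at $n=0$: a direct count gives $B_{7,11}(11)=731\equiv5\pmod{11}$. The paper instead reads off the vanishing at $k=1,5,6$ from the explicit form $U_7^{11}F\equiv 8q^2f_7^9f_1$ and the $7$–dissection of $f_1$, which contains only residues $0,1,2,5$.
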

\begin{theorem}\label{b}\normalfont For all $n\geq0$ and $m\geq4$,
\begin{eqnarray}
B_{27,11}\left(3^mn+\frac{5\cdot3^{m-1}-3}{2}\right)&\equiv&0 \pmod{11}.
\end{eqnarray}
\end{theorem}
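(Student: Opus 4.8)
The plan is to work entirely modulo $11$ and to exploit the self-similarity of triangular-number theta series under $3$-dissection. First I would reduce the generating function \eqref{0.1}: since $\binom{11}{k}\equiv 0\pmod{11}$ for $0<k<11$, the Frobenius congruence gives $f_1^{11}\equiv f_{11}\pmod{11}$, so
\begin{equation*}
\sum_{n=0}^{\infty}B_{27,11}(n)q^n=\frac{f_{27}f_{11}}{f_1^2}\equiv \frac{f_{27}f_1^{11}}{f_1^2}=f_{27}f_1^9\pmod{11}.
\end{equation*}
The whole problem thus becomes the study of the coefficients of $f_{27}f_1^9$ on the progression $3^mn+c_m$, where $c_m=\frac{5\cdot 3^{m-1}-3}{2}$ satisfies the recursion $c_{m+1}=3c_m+3$ and hence $c_m\equiv 0\pmod 3$ for every $m\ge 2$.

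The engine of the argument is the two-term $3$-dissection of $f_1^3$. By Jacobi's identity $f_1^3=\sum_{j\ge 0}(-1)^j(2j+1)q^{j(j+1)/2}$, and since a triangular number $j(j+1)/2$ is never congruent to $2\pmod 3$, one has $f_1^3=T_0+T_1$, where $T_0$ collects the exponents $\equiv 0\pmod 3$ and $T_1$ those $\equiv 1\pmod 3$; moreover $2j+1=3(2k+1)$ whenever $j=3k+1$, so every coefficient of $T_1$ is divisible by $3$. Cubing gives the clean split $f_1^9=(T_0^3+T_1^3)+3T_0^2T_1+3T_0T_1^2$ into the residues $0,1,2\pmod 3$, and multiplying by $f_{27}$ (a power series in $q^{27}$, lying in the class $0\pmod 3$) preserves this trichotomy. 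Because $c_1\equiv 1$ while $c_m\equiv 0\pmod 3$ for $m\ge 2$, the relevant family lives in the class-$0$ branch $f_{27}(T_0^3+T_1^3)$, on which I would iterate: after the substitution $q^3\mapsto q$ the factor $f_{27}$ becomes $f_9$, a second pass turns it into $f_3$, and each pass (via the self-similar dissection of the triangular series inside $T_0,T_1$) produces a new generating function of the same shape while advancing the offset according to $c_{m+1}=3c_m+3$. The two substitutions $q^3\mapsto q$ needed to reduce $f_{27}=f_{3\cdot 9}$ down to the factor $f_3$ of the $B_{3,11}$ setting are exactly what raise the threshold from Dou's value $\alpha\ge 2$ for $B_{3,11}$ to $m\ge 4$ here.

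Carrying this out, I would record the iterated dissections as a finite system of standard $3$-dissection lemmas for the relevant eta-quotients and triangular theta series, extracting at each stage the residue class dictated by $c_m$ modulo the appropriate power of $3$. The outcome I expect is a self-similar recursion of the form $G_{m+1}\equiv u\,G_m\pmod{11}$ with $u$ a unit modulo $11$, where $G_m(q)=\sum_n B_{27,11}(3^mn+c_m)q^n$, once $m$ is large enough, together with a base case $G_4\equiv 0\pmod{11}$, that is $B_{27,11}(81n+66)\equiv 0\pmod{11}$ for all $n$; the recursion then propagates the vanishing to every $m\ge 4$.

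The hard part is the base case, and with it the location of the explicit factor of $11$. The accumulating powers of $3$ coming from the $T_1$-coefficients are units modulo $11$ and cannot by themselves force a congruence, so the vanishing must come from the arithmetic of the surviving coefficients $(2j+1)$: after the two reductions, the class $c_4=66$ should isolate a subseries on which these coefficients are identically divisible by $11$ (equivalently, a reduction to the Dou-type identity for $B_{3,11}$ that already supplies the factor of $11$). Making this explicit — pinning down the correct eta-quotient identities at each level and verifying that the fourth extraction annihilates the surviving theta series modulo $11$ — is where essentially all the computational work lies; the bookkeeping of the offsets $c_m$ and the propagation of the recursion is then routine.
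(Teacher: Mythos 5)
Your reduction to $f_{27}f_1^9\pmod{11}$ and your choice of dissection tool are sound: your $T_0$ and $T_1$ are exactly the two pieces $f_3\,a(q^3)$ and $-3qf_9^3$ of the identity \eqref{3k} that the paper uses, and your overall skeleton (a base case $B_{27,11}(81n+66)\equiv0\pmod{11}$ plus a self-similar recursion $G_{m+1}\equiv uG_m$, then induction) is also the paper's skeleton. But your claimed mechanism for the iteration is wrong. You assert that because $c_m\equiv0\pmod 3$ for $m\ge2$ the whole family lives in the class-$0$ branch $f_{27}(T_0^3+T_1^3)$. The residues one must extract at successive stages are not ``$c_m\bmod 3$ over and over'' but the base-$3$ digits of $c_m$, and $c_m=\frac{5\cdot3^{m-1}-3}{2}$ has base-$3$ expansion $2\,1\,1\cdots1\,0$ (for instance $c_4=66=(2110)_3$). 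So only the \emph{first} extraction is in class $0$; after that one must repeatedly extract class $1$, and finally class $2$. An iteration confined to the class-$0$ branch computes the wrong arithmetic progressions. Relatedly, the trichotomy is not preserved for long: after two extractions the factor descending from $f_{27}$ is $f_3$, and the pieces $a(q)$, $f_1^3$, $1/f_1^3$ that reappear must each be re-dissected at every stage; this is precisely why the paper needs, besides \eqref{3k}, the further identities \eqref{2k} and \eqref{6k}.

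The second gap is that the two statements carrying all the content are never proved: the base case $B_{27,11}(81n+66)\equiv0\pmod{11}$ and the recursion $B_{27,11}(81n+39)\equiv9\,B_{27,11}(27n+12)\pmod{11}$ (from which the theorem follows by replacing $n$ by $3^{m-3}n+d$ and inducting, exactly as you outline). You explicitly defer these as ``where essentially all the computational work lies,'' so the proposal is a plan rather than a proof. Moreover, your heuristic for where the factor $11$ comes from is off: it is not that surviving theta coefficients $2j+1$ are individually divisible by $11$. In the paper's computation one reaches $\sum_{n\ge0}B_{27,11}(27n+12)q^n\equiv 8a(q)^2f_1^3f_3^3\pmod{11}$ (equation \eqref{10k}), and upon substituting \eqref{3k} and \eqref{2k} once more, the class-$2$ part of $8f_3^3\bigl(a(q^3)+6qf_9^3/f_3\bigr)^2\bigl(f_3a(q^3)-3qf_9^3\bigr)$ cancels: the two contributions to the $q^2$ term, $36\,a(q^3)f_3^2f_9^6$ and $-36\,a(q^3)f_3^2f_9^6$, annihilate each other, leaving no term $q^{3n+2}$ at all. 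The congruence is a cancellation phenomenon among several terms of the fourth dissection, not a divisibility property of individual theta coefficients, and no amount of bookkeeping of the offsets $c_m$ substitutes for exhibiting it.
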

\begin{theorem}\label{c}\normalfont For all $n\geq0$,
\begin{eqnarray}
B_{243,17}(81n+23)&\equiv&0\pmod{17},\\
B_{243,17}(81n+77)&\equiv&0\pmod{17}.
\end{eqnarray}
\end{theorem}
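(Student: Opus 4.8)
The plan is to work throughout modulo $17$ and to exploit Fermat's congruence $f_{17}=(q^{17};q^{17})_\infty\equiv(q;q)_\infty^{17}=f_1^{17}\pmod{17}$. Substituting this into the generating function \eqref{0.1} with $(s,t)=(243,17)$ gives
\begin{equation}\nonumber
\sum_{n=0}^{\infty}B_{243,17}(n)q^n=\frac{f_{243}f_{17}}{f_1^2}\equiv f_{243}\,f_1^{15}\pmod{17}.
\end{equation}
By Euler's pentagonal number theorem applied to $f_1(q^{243})$, the factor $f_{243}$ is a power series in $q^{243}$, so every exponent it contributes is divisible by $243$ and hence by $81$. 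Consequently, writing $p_{15}(m):=[q^m]f_1^{15}$, for any fixed residue $r$ one has $[q^{81n+r}](f_{243}f_1^{15})=\sum_k(-1)^k p_{15}(81n+r-243\,g_k)$ with $g_k=k(3k-1)/2$, and every argument on the right is again $\equiv r\pmod{81}$. Therefore it suffices to prove the two statements for $f_1^{15}$ alone: $p_{15}(81n+23)\equiv p_{15}(81n+77)\equiv0\pmod{17}$ for all $n\ge0$.

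Next I would extract the progressions $23$ and $77$ modulo $81=3^4$ by iterating the $3$-dissection four times, tracking the residues at each level: $23,77\equiv2\pmod3$, then $\equiv5\pmod9$, then $\equiv23\pmod{27}$, and finally splitting into $23$ and $77$ (together with $50$) modulo $81$. Equivalently, set $K(q):=\sum_{i\ge0}p_{15}(27i+23)q^i$, the series carrying the $23\pmod{27}$ progression of $f_1^{15}$. Since $27\cdot0+23=23$, $27\cdot1+23=50$, and $27\cdot2+23=77$ correspond to $i\equiv0,1,2\pmod3$ respectively, the desired congruences are equivalent to the single support statement that, modulo $17$, the series $K(q)$ is concentrated on the exponents $i\equiv1\pmod3$; that is, the $i\equiv0$ and $i\equiv2$ components of $K$ vanish mod $17$, while the $i\equiv1$ component (which feeds $B_{243,17}(81n+50)$) need not.

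To carry out the dissections I would use the root-of-unity identity $(\omega q;\omega q)_\infty\,(\omega^2 q;\omega^2 q)_\infty=f_3^4/(f_9 f_1)$ (with $\omega$ a primitive cube root of unity), obtained by grouping the three conjugate Euler products, together with Jacobi's identity $f_1^3=\sum_{n\ge0}(-1)^n(2n+1)q^{n(n+1)/2}$; these yield explicit $3$-dissections of the eta powers that appear, after which I reduce all coefficients mod $17$. Applying this repeatedly to $f_1^{15}=(f_1^3)^5$ produces, at each stage, an eta-quotient in $q,q^3,q^9,\dots$ whose next $3$-dissection is again computable by the same identities.

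The main obstacle I anticipate is the bookkeeping forced by the fifteenth power: under four successive $3$-dissections, $f_1^{15}$ spawns a growing collection of eta-quotients, and one must track them accurately while reducing mod $17$ so that no surviving term is lost. The decisive and most delicate point is the final step, the proof of the support statement for $K(q)$ modulo $17$: one must exhibit $K(q)$ (mod $17$) in closed form as an eta-quotient and show that its own $3$-dissection kills precisely the $i\equiv0$ and $i\equiv2$ classes but not $i\equiv1$. I expect this asymmetry to come from a multiplier of the shape $f_3^a f_9^b\cdots$ combining with binomial coefficients that are divisible by $17$ in exactly the two required classes; isolating that factor, rather than the routine dissection algebra, is where the real work lies.
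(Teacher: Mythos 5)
Your high-level strategy coincides with the paper's: reduce modulo $17$ to $f_{243}f_1^{15}$, perform iterated $3$-dissections tracking the residues $2\pmod 3\to 5\pmod 9\to 23\pmod{27}$, and finish with a support statement saying that the series carrying the $27n+23$ progression lives, mod $17$, only on exponents $\equiv 1\pmod 3$. Two parts of your proposal are genuinely sound: the reduction discarding $f_{243}$ (valid since $243=3\cdot 81$, and in fact slightly cleaner than the paper, which just carries the factor along passively as $f_{243}\to f_{81}\to f_{27}\to f_9$ under successive dissections), and the identification of the exact support statement needed — the $i\equiv 0,2$ classes of your $K(q)$ must die while $i\equiv 1$ (feeding $B_{243,17}(81n+50)$) survives, which is precisely what the paper's final identity \eqref{g5} exhibits.

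The genuine gap is that the decisive computation is never performed, and you acknowledge this yourself ("where the real work lies"). The support statement for $K(q)$ is not a technical detail that can be deferred: it is the entire content of the theorem, and nothing in your sketch reduces it to anything simpler or exhibits a mechanism that would force it. The paper's proof consists exactly of this work: using the cubic theta function identities \eqref{3k} and \eqref{2k}, it writes each successively extracted series as a polynomial in $a(q^3)$, $f_3$, $f_9$ (times the tag-along factor from $f_{243}$) with coefficients reduced mod $17$ — equations \eqref{g2}, \eqref{g3}, \eqref{g4} — and at the fourth level finds that every surviving term of $16q f_3^{15}f_9+6q\,a(q^3)^3 f_3^6 f_9^4+8q^4 f_3^3 f_9^{13}$ carries a power $q^{3i+1}$, which kills both $81n+23$ and $81n+77$. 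Your proposed tools (Jacobi's identity and the product $(\omega q;\omega q)_\infty(\omega^2 q;\omega^2 q)_\infty=f_3^4/(f_9f_1)$) could in principle regenerate such dissection formulas, since they encode the key structural fact that $f_1^3$ has no component in the class $2\pmod 3$; but your anticipated mechanism — "binomial coefficients divisible by $17$ in exactly the two required classes" — is not what happens. The vanishing arises from cancellation among the integer coefficients of the $a(q^3)$–$f_3$–$f_9$ expansion after reduction mod $17$, and there is no visible way to certify it short of carrying out all four levels of dissection explicitly, which is what the paper does and your proposal does not.
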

\section{The identities}
In this section, we prove some lemmas to prove our main results.
By the binomial theorem for any prime $p$,
\begin{equation}\label{k1}
f_p\equiv f^p_1\pmod p.
\end{equation}
% Let
% \begin{equation}\nonumber
% v=\ds\frac{f_1f^3_6}{f_2f^3_3}.
% \end{equation}
% \begin{equation}\nonumber
% u=v(q^3).
% \end{equation}
% Then %  \cite[(14.8.5)]{H1}
% \begin{align}\label{7}
% f^3_1 &=f^3_9(u^{-1}-3q+4q^3u^2),
% \end{align}
% and, if $\omega$ is a cube root of unity othen then $1$ and we set $q$, $\omega q$ and $\omega^2q$ for $q$ in this, multiply the three results and replace $q^3$ by $q$, we obtain
% %as well as the identities
% %Here we have used Entry $1$ on page $345$ of in \cite{Berndt}, namely,\\
% \begin{align}\label{8}
% \ds\frac{f^{12}_1}{f^{12}_3}+27q&=(v^{-1}+4qv^2)^3.
% \end{align}
\begin{lemma}\normalfont (Berndt \cite[p.49]{Berndt}), we have
\begin{eqnarray}\label{4w}
% \frac{f^2_1}{f_2}&=&\frac{f^2_9}{f_{18}}-2q\frac{f_3f^2_{18}}{f_6f_9},\\\label{a4.1}
% \end{equation}
% \begin{equation}\label{a4.1}
\frac{f_2^2}{f_1}&=&\frac{f_6 f_9^2}{f_3 f_{18}}+q\frac{f_{18}^2}{f_9}.
\end{eqnarray}
\end{lemma}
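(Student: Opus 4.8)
The plan is to recognize the left-hand side as one of Ramanujan's theta functions and then carry out its $3$-dissection. By the Jacobi triple product one has the classical evaluation
$$\frac{f_2^2}{f_1}=\sum_{n=0}^{\infty}q^{n(n+1)/2}=:\psi(q),$$
so the asserted identity is nothing but the splitting of $\psi(q)$ according to the residue class modulo $3$ of the exponent. First I would record the elementary arithmetic facts about the triangular number $T_n=n(n+1)/2$: it is never congruent to $2\pmod 3$, and $T_n\equiv 1\pmod 3$ holds precisely when $n\equiv 1\pmod 3$. This reduces the problem to matching two explicit subseries of $\psi(q)$ with the two summands on the right.

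For the residue-$1$ part, writing $n=3m+1$ gives $T_{3m+1}=9m(m+1)/2+1$, so these terms sum to $q\sum_{m\geq 0}q^{9m(m+1)/2}=q\,\psi(q^9)=q\,f_{18}^2/f_9$, which is exactly the second term on the right-hand side.

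For the residue-$0$ part (the terms with $n\equiv 0,2\pmod 3$), I would reassemble the two arithmetic subsequences $n=3m$ and $n=3m+2$ into a single bilateral theta series and identify it with Ramanujan's general theta function $f(a,b)=\sum_{k\in\mathbb{Z}}a^{k(k+1)/2}b^{k(k-1)/2}$ evaluated at $a=q^3,\ b=q^6$; a direct comparison of exponents shows $f(q^3,q^6)=\sum_{k\in\mathbb{Z}}q^{(9k^2-3k)/2}$ carries precisely the exponents $\{T_n:\ n\equiv 0,2\pmod 3\}$. Applying the Jacobi triple product $f(a,b)=(-a;ab)_\infty(-b;ab)_\infty(ab;ab)_\infty$ with $ab=q^9$, together with the product identities $(-q^3;q^9)_\infty(-q^6;q^9)_\infty=(-q^3;q^3)_\infty/(-q^9;q^9)_\infty$ and the elementary $(-q^k;q^k)_\infty=f_{2k}/f_k$, then collapses $f(q^3,q^6)$ into the eta-quotient $f_6 f_9^2/(f_3 f_{18})$, which is the first term on the right-hand side. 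Adding the two contributions completes the proof.

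The main obstacle is this third step: correctly merging the $n\equiv 0$ and $n\equiv 2$ subsequences into one bilateral sum (getting the index substitution and the direction of summation right) and then manipulating the resulting triple product into the stated eta-quotient. The residue bookkeeping modulo $3$ and the telescoping of the $(-q^3;q^9)_\infty(-q^6;q^9)_\infty$ product are where sign or index slips are most likely, so I would guard against them by comparing the power-series coefficients of both sides through a modest order as an independent check.
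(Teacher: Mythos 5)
Your proof is correct. Note that the paper itself gives no proof of this lemma---it is quoted directly from Berndt \cite[p.~49]{Berndt}---so there is nothing in the paper to compare against step by step; what you have reconstructed is essentially the classical derivation of that entry: $f_2^2/f_1=\psi(q)$, the observation that $T_n=n(n+1)/2$ is never $\equiv 2\pmod 3$ and is $\equiv 1\pmod 3$ exactly when $n\equiv 1\pmod 3$, the identification of the residue-$1$ terms with $q\,\psi(q^9)=q\,f_{18}^2/f_9$, and the merging of the $n\equiv 0,2\pmod 3$ terms into $f(q^3,q^6)=\sum_{k\in\mathbb{Z}}q^{(9k^2-3k)/2}$, which the Jacobi triple product reduces to $f_6f_9^2/(f_3f_{18})$. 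The index bookkeeping you flagged as the risky step does check out: $k\geq 1$ corresponds bijectively to $n=3k-1$ and $k\leq 0$ to $n=-3k$, with no overlap and no omission, so the bilateral sum captures exactly the residue-$0$ and residue-$2$ subsequences.
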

\begin{lemma}\normalfont (Hirschhorn and Sellers \cite{M. D. H and J. A. S}), we have
\begin{equation}\label{a5}
\frac{f_2}{f^2_1}=\frac{f^4_6f^6_9}{f^8_3f^3_{18}}+2q\frac{f^3_6f^3_9}{f^7_3}+4q^2\frac{f^2_6f^3_{18}}{f^6_3}.
\end{equation}
\end{lemma}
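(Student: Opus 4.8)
The plan is to recognize the left-hand side as the reciprocal of a classical theta function and to exploit the fact that this theta function has only a \emph{two-term} $3$-dissection, which can then be inverted by a sum-of-cubes (cube-root-of-unity) rationalization. First I would record that $\frac{f_2}{f_1^2} = 1/\varphi(-q)$, where $\varphi(-q) = \sum_{n=-\infty}^{\infty}(-1)^n q^{n^2} = \frac{f_1^2}{f_2}$. Since $n^2 \equiv 0$ or $1 \pmod 3$ for every integer $n$, the series $\varphi(-q)$ contains no terms $q^{3k+2}$; splitting it according to $n \equiv 0$ and $n \not\equiv 0 \pmod 3$ yields the two-term dissection
$$\varphi(-q) = \varphi(-q^9) - 2q\, f(-q^3,-q^{15}),$$
where $f(a,b) = \sum_n a^{n(n+1)/2} b^{n(n-1)/2}$. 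Applying the Jacobi triple product to each piece converts this into eta-quotients, giving
$$\varphi(-q) = A + qB, \qquad A = \frac{f_9^2}{f_{18}}, \qquad B = -\frac{2 f_3 f_{18}^2}{f_6 f_9},$$
where both $A$ and $B$ are power series in $q^3$. The identification $f(-q^3,-q^{15}) = f_3 f_{18}^2/(f_6 f_9)$ is a short rearrangement of products among the residues modulo $18$.

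Next, because $A$ and $B$ are functions of $q^3$, I would rationalize the reciprocal using the sum-of-cubes factorization,
$$\frac{f_2}{f_1^2} = \frac{1}{A+qB} = \frac{A^2 - qAB + q^2 B^2}{A^3 + q^3 B^3}.$$
The denominator $D := A^3 + q^3 B^3$ is now itself a power series in $q^3$, so the three numerator pieces $A^2/D$, $-qAB/D$, $q^2 B^2/D$ are exactly the three residue classes of the $3$-dissection. The crucial observation is that
$$D = A^3 + q^3 B^3 = \prod_{j=0}^{2}(A + \omega^j q B) = \varphi(-q)\,\varphi(-\omega q)\,\varphi(-\omega^2 q),$$
where $\omega = e^{2\pi i/3}$, since replacing $q$ by $\omega q$ fixes $A(q^3)$ and multiplies $qB$ by $\omega$. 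Evaluating this product by the elementary identity $\prod_{j=0}^{2}(\omega^j q;\omega^j q)_\infty = f_3^4/f_9$ (and its analogue with $q$ replaced by $q^2$) collapses $D$ to a single eta-quotient,
$$D = \frac{(f_3^4/f_9)^2}{f_6^4/f_{18}} = \frac{f_3^8 f_{18}}{f_6^4 f_9^2}.$$

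Finally I would substitute the explicit $A$, $B$, and $D$ into $A^2/D$, $-AB/D$, $B^2/D$; each simplification is routine eta-quotient bookkeeping and produces exactly $\frac{f_6^4 f_9^6}{f_3^8 f_{18}^3}$, $2\frac{f_6^3 f_9^3}{f_3^7}$, and $4\frac{f_6^2 f_{18}^3}{f_3^6}$, which together with the rationalization above is precisely the assertion \eqref{a5}. I expect the main obstacle to lie in the two preparatory identities rather than in the closing algebra: namely, establishing the two-term dissection of $\varphi(-q)$ in clean eta-quotient form, and proving the cube-root-of-unity product formula that evaluates $D$. Both are standard but must be handled carefully, since the entire argument hinges on $A$, $B$, and $D$ being genuine series in $q^3$, which is what makes the three rationalized pieces the components of the dissection.
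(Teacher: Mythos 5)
Your proposal is correct, and it is complete in all essentials: I verified the two-term dissection $\varphi(-q)=\varphi(-q^9)-2qf(-q^3,-q^{15})$, the product evaluations $f(-q^3,-q^{15})=\frac{f_3f_{18}^2}{f_6f_9}$ and $\prod_{j=0}^{2}(\omega^jq;\omega^jq)_\infty=\frac{f_3^4}{f_9}$ (whence $D=\frac{f_3^8f_{18}}{f_6^4f_9^2}$, using that $\{2j \bmod 3\}$ runs over all residues in the denominator product), and the closing eta-quotient algebra, which does yield exactly the three terms of \eqref{a5}. Note that the paper itself supplies no proof of this lemma at all — it is quoted verbatim from Hirschhorn and Sellers — so there is no internal argument to compare against; your rationalization-by-cube-roots-of-unity derivation is the standard proof in the literature (it is the method of the cited source and of Hirschhorn's \emph{The Power of $q$}), and as you observe, the only point requiring care is that $A$, $B$, and $D$ are genuinely series in $q^3$, which your construction guarantees.
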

\begin{lemma}\normalfont(Hirschhorn \cite[Eqs. (21.3.1), (22.1.4) and (39.2.8)]{H1}), we have
\begin{eqnarray}\label{3k}
f^3_1&=&f_3a(q^3)-3q f^3_9,\\\label{2k}
a(q)&=&a(q^3)+6q\frac{f^3_9}{f_3},\\\label{6k}
\frac{1}{f^3_1}&=&a^2(q^3)\frac{f^3_9}{f^{10}_3}+3qa(q^3)\frac{f^6_9}{f^{11}_3}+9q^2\frac{f^9_9}{f^{12}_3}.
\end{eqnarray}
where $$a(q)=\ds\sum^\infty_{m,n=-\infty}q^{m^2+mn+n^2}.$$
\end{lemma}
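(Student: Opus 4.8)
The plan is to read all three identities through the classical theory of Borwein's cubic theta functions. Beside the function $a(q)=\sum_{m,n}q^{m^2+mn+n^2}$ already introduced, I set $b(q)=\sum_{m,n}\omega^{m-n}q^{m^2+mn+n^2}$ with $\omega=e^{2\pi i/3}$, and let $c(q)$ denote the shifted lattice sum $\sum_{m,n}q^{(m+1/3)^2+(m+1/3)(n+1/3)+(n+1/3)^2}$. I will use three standard inputs: the product evaluations $b(q)=f_1^3/f_3$ and $c(q^3)=3q\,f_9^3/f_3$ (so that $c(q^3)^3=27q^3 f_9^9/f_3^3$); the Borwein cubic identity $a(q)^3=b(q)^3+c(q)^3$; and the cubic transformation formulas, which I package as the two linear relations $a(q)=a(q^3)+2c(q^3)$ and $b(q)=a(q^3)-c(q^3)$. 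Each is classical, provable by the Jacobi triple product (for the products), by a $3$-dissection of the lattice sum (for the transformation), and by the Borweins' own argument (for the cubic relation); I would either cite Hirschhorn's monograph or reproduce these short derivations.

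Granting these inputs, identities \eqref{2k} and \eqref{3k} drop out immediately. Substituting $c(q^3)=3q f_9^3/f_3$ into $a(q)=a(q^3)+2c(q^3)$ yields $a(q)=a(q^3)+6q f_9^3/f_3$, which is \eqref{2k}. For \eqref{3k}, multiply $b(q)=a(q^3)-c(q^3)$ through by $f_3$ and insert $b(q)=f_1^3/f_3$ together with $f_3\,c(q^3)=3q f_9^3$, giving $f_1^3=f_3\,a(q^3)-3q f_9^3$, which is \eqref{3k}.

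For \eqref{6k} I would exploit the two-term structure already present in \eqref{3k}. Writing $A=f_3\,a(q^3)$ and $B=3q f_9^3$, identity \eqref{3k} reads $f_1^3=A-B$, so
\[
\frac{1}{f_1^3}=\frac{A^2+AB+B^2}{A^3-B^3}.
\]
The numerator expands as $A^2+AB+B^2=f_3^2 a^2(q^3)+3q f_3\,a(q^3)f_9^3+9q^2 f_9^6$, and dividing these three terms by $f_3^{12}/f_9^3$ reproduces exactly the three summands of \eqref{6k}; thus everything reduces to the single claim $A^3-B^3=f_3^{12}/f_9^3$. This is where the cubic identity is decisive: applying $a^3=b^3+c^3$ at argument $q^3$, with $b(q^3)=f_3^3/f_9$ and $c(q^3)^3=27q^3 f_9^9/f_3^3$, gives $a^3(q^3)=f_3^9/f_9^3+27q^3 f_9^9/f_3^3$, whence
\[
A^3-B^3=f_3^3\,a^3(q^3)-27q^3 f_9^9=\frac{f_3^{12}}{f_9^3}+27q^3 f_9^9-27q^3 f_9^9=\frac{f_3^{12}}{f_9^3},
\]
the two cubic terms cancelling. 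Feeding this back gives $\frac{1}{f_1^3}=\frac{f_9^3}{f_3^{12}}(A^2+AB+B^2)$, which is \eqref{6k}.

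The substitutions and the rational-function bookkeeping are routine; the real content sits entirely in the three imported facts. I expect the main obstacle to be proving the transformation relations $a(q)=a(q^3)+2c(q^3)$ and $b(q)=a(q^3)-c(q^3)$ from first principles, since these demand a careful $3$-dissection of $\sum_{m,n}q^{m^2+mn+n^2}$ --- splitting the lattice according to $m-n\bmod 3$ and recognising the resulting pieces as $a(q^3)$ and $c(q^3)$ --- together with the Borwein cubic identity $a^3=b^3+c^3$. As the paper attributes \eqref{3k}, \eqref{2k} and \eqref{6k} to Hirschhorn's book, I would be content to invoke these standard results and present only the short deductions above.
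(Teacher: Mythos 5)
Your proposal is correct: the product evaluations $b(q)=f_1^3/f_3$ and $c(q^3)=3qf_9^3/f_3$, the transformations $a(q)=a(q^3)+2c(q^3)$ and $b(q)=a(q^3)-c(q^3)$, and the Borwein identity $a^3=b^3+c^3$ do yield \eqref{3k} and \eqref{2k} immediately, and your computation for \eqref{6k} checks out, since with $A=f_3a(q^3)$, $B=3qf_9^3$ the cubic identity at argument $q^3$ gives exactly the cancellation $A^3-B^3=f_3^{12}/f_9^3+27q^3f_9^9-27q^3f_9^9=f_3^{12}/f_9^3$, after which $(A^2+AB+B^2)f_9^3/f_3^{12}$ reproduces the three summands of \eqref{6k} term by term. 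The paper itself supplies no proof --- it cites Hirschhorn's monograph, where these identities are established by precisely the cubic theta-function apparatus you invoke --- so your argument is in substance the standard one, with the modest added value that you have made the deduction of \eqref{6k} from \eqref{3k} and $a^3=b^3+c^3$ explicit rather than leaving all three identities as black boxes.
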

\begin{lemma} \label{1.2}\normalfont For $n\geq0$,  we have
\begin{equation}\label{12}
\ds\sum_{n=0}^{\infty}p_5(7n+3)q^n=10f_1^4 f_7+49qf_7^5.
\end{equation}
\end{lemma}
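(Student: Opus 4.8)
The plan is to extract from $f_1^5=\sum_{m\ge0}p_5(m)q^m$ the part carried by the exponents $m\equiv 3\Mod{7}$ and to identify it as an eta-quotient. The residue $3$ is exactly the one singled out by $24m+5\equiv 0\Mod{7}$, which is why a clean closed form is to be expected there; this is the $f_1^5$ analogue of the residue $5$ behind Ramanujan's identity \eqref{j2}. Writing this residue component as $\sum_{n\ge0}p_5(7n+3)q^{7n+3}$, the assertion \eqref{12} is equivalent to
\begin{equation*}
\sum_{n\ge0}p_5(7n+3)\,q^{7n+3}=10\,q^3 f_7^4 f_{49}+49\,q^{10}f_{49}^5,
\end{equation*}
after which \eqref{12} follows by dividing by $q^3$ and replacing $q^7$ by $q$.

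To reach this component I would begin from the $7$-dissection of Euler's product $f_1=(q;q)_\infty$ (Hirschhorn \cite{H1}), which writes $f_1=f_{49}\,S$, where $S=S(q)$ is a sum of four theta-type terms supported only on the residues $0,1,2,5\Mod{7}$ — precisely the residues attained by the pentagonal numbers $k(3k-1)/2$ modulo $7$. Then $f_1^5=f_{49}^{5}\,S^5$, and I would retain only those monomials of $S^5$ whose total exponent is $\equiv 3\Mod{7}$. Since the four terms of $S$ carry the fixed residue tags $0,1,2,5$, only a limited family of the degree-five monomials survives, so the residue-$3$ part of $f_1^5$ is a finite, explicit combination of products of $f_{49}$ and the dissection components.

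The crux, and the step I expect to be the main obstacle, is to show that this residue-$3$ part collapses to exactly the two terms $10\,q^3 f_7^4 f_{49}$ and $49\,q^{10}f_{49}^5$. This reorganisation rests on Jacobi's triple product identity together with the algebraic relations among the four dissection components (the same relations that underlie Ramanujan's derivation of \eqref{j2}); every intermediate theta-quotient must be shown to cancel or to combine into a power of $f_7$ or $f_{49}$, and carrying the scalar factors through to the precise values $10$ and $49$ is the delicate part of the bookkeeping.

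As an independent safeguard — and a cleaner finish should the dissection bookkeeping become unwieldy — I would note that the right-hand side of \eqref{12} is an eta-quotient while the left-hand side is the image of $f_1^5$ under a (shifted) $U_7$ operator, so both are modular forms of the same half-integral weight and level. By a Sturm-type bound, an identity between two such forms is forced once they agree in finitely many initial coefficients. A short direct computation then closes the argument; for instance the constant terms already match, since $p_5(3)=10=\bigl(10 f_1^4 f_7+49qf_7^5\bigr)\big|_{q^0}$, and a handful of further coefficients suffices.
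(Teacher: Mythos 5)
Your strategy coincides with the paper's own proof: it too substitutes the $7$-dissection \eqref{4} of $f_1$ into $f_1^5$, extracts the exponents $\equiv 3\pmod 7$, and then collapses the result (your residue bookkeeping is correct, including the residues $0,1,2,5$ of the four dissection terms and the equivalence of the lemma with the statement before dividing by $q^3$). The problem is that the step you explicitly defer --- showing that the surviving monomials collapse to exactly $10f_1^4f_7+49qf_7^5$ --- is the entire content of the proof, and your proposal leaves it open. In the paper this step is not a delicate de novo theta computation: after extraction, the degree-five monomials in $A,B,C$ group into just two symmetric combinations plus a scalar multiple of $q$, giving \eqref{15}, and the paper then simply quotes the known identities \eqref{9p} and \eqref{10p} (from Berndt, Ramanujan's Notebooks IV, Entry 31, and Berndt--Yee--Yi), which say that these two combinations equal $\frac{f_1^4}{f_7^4}+8q$ and $\frac{f_1^4}{f_7^4}+5q$ respectively. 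Substituting yields $f_7^5\bigl(10\frac{f_1^4}{f_7^4}+(160-50-61)q\bigr)=10f_1^4f_7+49qf_7^5$. So the missing idea in your write-up is precisely these two classical relations; gesturing at ``Jacobi's triple product plus the algebraic relations among the components'' is not enough, because identifying (or locating in the literature) exactly those relations is where all the work lies.

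Your Sturm-bound fallback is a genuinely different route and would be acceptable if completed, but it too is only sketched: one must pin down the spaces --- the left side is, up to a power of $q$, the image of $\eta(\tau)^5$ under a $U_7$-type operator, a form of weight $5/2$ on a level one must determine, while the right side corresponds to $10\,\eta(\tau)^4\eta(7\tau)+49\,\eta(7\tau)^5$ --- verify that both lie in the same finite-dimensional space with the same character, compute the bound, and check that many coefficients. For half-integral weight this needs some care (for instance, reducing to integral weight by multiplying through by a suitable eta power before applying Sturm's theorem). As it stands, neither branch of your proposal is carried to the point of being a proof, although the first branch is exactly the paper's argument with its key citation removed.
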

\begin{proof}
Setting $k=5$ in \eqref{1w}, we have
\begin{equation}\label{13}
\ds\sum_{n=0}^{\infty}p_5(n)q^n=f^5_1.
\end{equation}
In\cite[p. 303, Entry 17(v)]{Berndt}, we have
\begin{equation}\label{4}
f_1=f_{49}\left(\frac{B(q^7)}{C(q^7)}-\frac{A(q^7)}{B(q^7)}q-q^2+\frac{C(q^7)}{A(q^7)}q^5\right),
\end{equation}
where $A$, $B$ and $C$ are defined by\\
$\ds A=A(q):=\frac{f(-q^3,-q^4)}{f(-q^2)}$,~~~$\ds B=B(q):=\frac{f(-q^2,-q^5)}{f(-q^2)}$ and $\ds C=C:=\frac{f(-q,-q^6)}{f(-q^2)}$\\
Substituting \eqref{4} into \eqref{13}, we have
\begin{equation}\label{14}
\ds\sum_{n=0}^{\infty}p_5(n)q^n=f^5_{49}\left(\frac{B(q^7)}{C(q^7)}-\frac{A(q^7)}{B(q^7)}q-q^2+\frac{C(q^7)}{A(q^7)}q^5\right)^5.
\end{equation}
If we extract those terms in which the power of $q$ is congruent to 3 modulo 7, divide by $q^3$ and replace $q^7$ by $q$, we have
\begin{align}\label{15}\nonumber
\ds\sum_{n=0}^{\infty}p_5(7n+3)q^n&=f^5_7\left(20\left(\frac{A B^2}{C^3}+\frac{A^2 C q}{B^3}-\frac{B C^2 q^2}{A^3}\right)\right.\\
&\qquad\left.\vphantom{\ds\frac{8q^3}{S}}
-10 \left(\frac{A^3}{B C^2}-\frac{B^3 q}{A^2 C}-\frac{C^3 q^2}{A B^2}\right)-61q\right).
\end{align}
From \cite[P. 174, Entry 31]{Berndt1} and \cite[Eq. 3.11 and Eq. 3.15]{Berndt2} in the terms of $A$, $B$ and $C$, we have
\begin{align}
\frac{B^5}{A C^4}-\frac{A^5}{B^4 C}-\frac{C^5 q^3}{A^4 B}&= 3q,\label{8p}\\
\frac{A B^2}{C^3}+\frac{A^2 C q}{B^3}-\frac{B C^2 q^2}{A^3}&=\frac{f_1^4}{f_7^4}+8q,\label{9p}\\
\frac{A^3}{B C^2}-\frac{B^3 q}{A^2 C}-\frac{C^3 q^2}{A B^2}&=\frac{f_1^4}{f_7^4}+5 q,\label{10p}\\
\frac{B^7}{C^7}-\frac{A^7 q}{B^7}+\frac{C^7 q^5}{A^7}&=14\frac{f_1^4}{f_7^4}q+\frac{f_1^8}{f_7^8}+57 q^2\label{11p}.
\end{align}
Substituting  \eqref{9p} and \eqref{10p} into \eqref{15} and simplifying, we completed the Lemma \ref{1.2}.
\end{proof}

\begin{lemma}\label{0.2}\normalfont\cite[Theorem 3.2]{Berndt2} For $n\geq0$,  we have
\begin{equation}
\ds\sum_{n=0}^{\infty}p_7(7n)q^n=\frac{f^8_1}{f_7}+49qf^3_7f_1^4.
\end{equation}
\end{lemma}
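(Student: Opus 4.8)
The plan is to mimic the proof of Lemma \ref{1.2}, but to extract the arithmetic progression $7n$ instead of $7n+3$ and to work with the seventh power of the $7$-dissection of $f_1$. First I would set $k=7$ in \eqref{1w}, so that $\sum_{n\ge 0}p_7(n)q^n=f_1^7$, and then substitute Berndt's dissection \eqref{4} to obtain
$$\sum_{n=0}^{\infty}p_7(n)q^n=f_{49}^7\left(\frac{B(q^7)}{C(q^7)}-\frac{A(q^7)}{B(q^7)}q-q^2+\frac{C(q^7)}{A(q^7)}q^5\right)^7.$$

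Next I would expand the seventh power by the multinomial theorem. Each monomial is indexed by a quadruple $(a,b,c,d)$ with $a+b+c+d=7$; it carries the sign $(-1)^{b+c}$, the multinomial coefficient $\binom{7}{a,b,c,d}$, the explicit factor $q^{b+2c+5d}$, and the product $A^{b-d}B^{a-b}C^{d-a}$ (all in the argument $q^7$). To read off $\sum_{n\ge 0}p_7(7n)q^n$ I would keep only those monomials with $b+2c+5d\equiv 0\pmod 7$---equivalently, since $5\equiv -2$, those with $b+2c\equiv 2d\pmod 7$---and then replace $q^7$ by $q$, which turns $f_{49}$ into $f_7$ and makes $A,B,C$ functions of $q$.

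The heart of the argument is the regrouping of the surviving $A,B,C$-monomials into the four symmetric combinations for which Berndt's identities \eqref{8p}--\eqref{11p} are available. The three ``pure'' indices $(7,0,0,0)$, $(0,7,0,0)$ and $(0,0,0,7)$ assemble precisely into the left-hand side of \eqref{11p}, contributing $\tfrac{f_1^8}{f_7^8}+14q\tfrac{f_1^4}{f_7^4}+57q^2$, while $(0,0,7,0)$ furnishes a separate $-q^2$; the weight-five and weight-three clusters of mixed indices reduce through \eqref{8p}, \eqref{9p} and \eqref{10p} to $\tfrac{f_1^4}{f_7^4}$-multiples plus pure powers of $q$. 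Summing all of these, the bracket should collapse to $\tfrac{f_1^8}{f_7^8}+49q\tfrac{f_1^4}{f_7^4}$, and multiplying by $f_7^7$ then yields $\tfrac{f_1^8}{f_7}+49qf_7^3f_1^4$, as claimed.

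The main obstacle is precisely this bookkeeping: correctly enumerating every quadruple $(a,b,c,d)$ with $b+2c+5d\equiv 0\pmod 7$, tracking the signs and multinomial weights, and recognizing which of the combinations in \eqref{8p}--\eqref{11p} each cluster produces. A secondary check is that the stray pure powers of $q$---the $57q^2$-type contribution from \eqref{11p} together with the $3q$-, $8q$- and $5q$-type terms from \eqref{8p}--\eqref{10p} and the $-q^2$ above---all cancel, so that the final coefficients come out to the clean values $1$ and $49$; a single sign or coefficient slip here would spoil the identity.
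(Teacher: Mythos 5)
You should first note that the paper itself contains \emph{no} proof of this lemma: it is quoted verbatim from Berndt--Yee--Yi \cite[Theorem 3.2]{Berndt2}, so there is no internal argument to compare against. What you propose is a genuine proof, in exactly the style the paper uses for its Lemmas \ref{1.2} and \ref{1.1}, and it does work: the bookkeeping you flag as the main obstacle closes up cleanly. Concretely, the quadruples $(a,b,c,d)$ with $a+b+c+d=7$ and $b+2c+5d\equiv 0\pmod 7$ are $(7,0,0,0)$, $(0,7,0,0)$, $(0,0,7,0)$, $(0,0,0,7)$ together with $(4,2,0,1)$, $(3,1,3,0)$, $(5,0,1,1)$, $(1,5,1,0)$, $(2,3,2,0)$, $(1,4,0,2)$, $(0,3,3,1)$, $(3,0,2,2)$, $(2,2,1,2)$, $(1,1,4,1)$, $(2,1,0,4)$, $(1,0,3,3)$, $(0,2,2,3)$, $(0,1,1,5)$, and after extraction the bracket assembles into
\begin{align*}
&\left(\frac{B^7}{C^7}-\frac{A^7q}{B^7}+\frac{C^7q^5}{A^7}\right)+245\,q\left(\frac{AB^2}{C^3}+\frac{A^2Cq}{B^3}-\frac{BC^2q^2}{A^3}\right)\\
&\quad-42\,q\left(\frac{B^5}{AC^4}-\frac{A^5}{B^4C}-\frac{C^5q^3}{A^4B}\right)-210\,q\left(\frac{A^3}{BC^2}-\frac{B^3q}{A^2C}-\frac{C^3q^2}{AB^2}\right)-841\,q^2,
\end{align*}
where $245=105+140$, $-841=-630-210-1$ (the $-1$ being your separate $-q^2$ from $(0,0,7,0)$). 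Applying \eqref{8p}--\eqref{11p}, this equals $\frac{f_1^8}{f_7^8}+(14+245-210)\,q\,\frac{f_1^4}{f_7^4}+(57+1960-126-1050-841)\,q^2=\frac{f_1^8}{f_7^8}+49\,q\,\frac{f_1^4}{f_7^4}$, and multiplying by $f_7^7$ gives the lemma. One small correction to your sketch: unlike the paper's proof of Lemma \ref{1.1}, where the ninth power forces squares and cross-products of \eqref{9p} and \eqref{10p}, here every mixed cluster enters \emph{linearly} in \eqref{8p}--\eqref{10p}; in particular no cluster other than the pure one contributes an $\frac{f_1^8}{f_7^8}$ term, which is why the coefficient $1$ survives untouched.
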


\begin{lemma} \label{1.1}\normalfont For $n\geq0$,  we have
\begin{equation}\label{2}
\ds\sum_{n=0}^{\infty}p_9(7n+4)q^n=-90 f_1^8 f_7-882qf_1^4 f_7^5-2401q^2f_7^9.
\end{equation}
\end{lemma}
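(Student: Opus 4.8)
The plan is to follow the proof of Lemma~\ref{1.2} verbatim, replacing the fifth power by the ninth power and the residue class $3$ by $4$. Putting $k=9$ in \eqref{1w} gives $\sum_{n\ge 0}p_9(n)q^n=f_1^9$, and substituting the septic dissection \eqref{4} yields
\[
\sum_{n=0}^{\infty}p_9(n)q^n=f_{49}^{9}\left(\frac{B(q^7)}{C(q^7)}-\frac{A(q^7)}{B(q^7)}q-q^2+\frac{C(q^7)}{A(q^7)}q^5\right)^{9}.
\]
Because $f_{49}$ involves only powers of $q$ divisible by $7$, the residue of every term modulo $7$ is decided by the four-term bracket. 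Expanding by the multinomial theorem, the choice of the $q^0$-, $q^1$-, $q^2$- and $q^5$-summands with multiplicities $n_0,n_1,n_2,n_5$ (where $n_0+n_1+n_2+n_5=9$) contributes the monomial $(-1)^{n_1+n_2}A^{\,n_1-n_5}B^{\,n_0-n_1}C^{\,n_5-n_0}$ carrying the exponent $n_1+2n_2+5n_5$. I would keep exactly the tuples with $n_1+2n_2+5n_5\equiv 4\pmod 7$, divide by $q^4$, and send $q^7\mapsto q$, so that $f_{49}^9\mapsto f_7^9$ and $A(q^7),B(q^7),C(q^7)\mapsto A,B,C$.

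Every surviving monomial has $A$-$B$-$C$ degree $(n_1-n_5)+(n_0-n_1)+(n_5-n_0)=0$, matching the balanced shape of the left-hand sides of \eqref{8p}--\eqref{11p}. The task is then to collect the (roughly thirty) surviving terms into integer combinations of those four left-hand sides, their $q$-shifts, and their pairwise products, and to substitute the right-hand sides. For example, the tuple $(n_0,n_1,n_2,n_5)=(7,0,2,0)$ produces $B^7/C^7$ at exponent $4$, the leading term of \eqref{11p}; this seventh-degree pattern is the essential new feature absent from the $p_5$ computation of Lemma~\ref{1.2}. By contrast, a monomial such as $A^4B/C^5$ (from $(5,4,0,0)$) is not a term of any single identity and only appears through the product of \eqref{9p} and \eqref{10p}, whose right-hand side is $f_1^8/f_7^8+13q\,f_1^4/f_7^4+40q^2$; likewise the pattern $B^5/(AC^4)$ of \eqref{8p} enters through a $q$-multiple.

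Writing $u:=f_1^4/f_7^4$, the substitutions turn the bracket into a polynomial in $u$ and $q$, and the assertion \eqref{2} is equivalent to this polynomial being $-90u^2-882qu-2401q^2$; multiplying by $f_7^9$ and using $f_7^9u^2=f_1^8f_7$ and $f_7^9u=f_1^4f_7^5$ then gives the three stated terms. The main obstacle is exactly this reduction: unlike Lemma~\ref{1.2}, where only $u^1$ occurs and \eqref{9p},\eqref{10p} suffice, the ninth power produces $u^2$ from several distinct monomial groups, so one must invoke \eqref{11p} together with products and $q$-shifts of \eqref{9p} and \eqref{10p}, all while tracking the signs $(-1)^{n_1+n_2}$ and the multinomial coefficients and matching the running totals against $-90$, $-882$ and $-2401$. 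No identity beyond \eqref{8p}--\eqref{11p} should be needed.
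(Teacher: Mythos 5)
Your proposal follows exactly the paper's route: the paper likewise substitutes the septic dissection \eqref{4} into $f_1^9$, extracts the exponents congruent to $4$ modulo $7$, and regroups the resulting balanced $A,B,C$-monomials as $36\cdot\eqref{11p}$, $126\bigl(q\cdot\eqref{8p}+\eqref{9p}\cdot\eqref{10p}\bigr)$, $-252\bigl(\eqref{9p}^2-2q\cdot\eqref{10p}\bigr)$, plus $q$-multiples of \eqref{8p}, \eqref{9p}, \eqref{10p} and a constant, before substituting the right-hand sides. Your key structural observations (the $B^7/C^7$ term forcing \eqref{11p}, and $A^4B/C^5$ arising only from the product of \eqref{9p} and \eqref{10p}) are precisely the groupings the paper uses, so the proposal is correct and essentially identical to the paper's proof, modulo carrying out the final coefficient bookkeeping.
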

\begin{proof}
Setting $k=9$ in \eqref{1w}, we have
\begin{equation}\label{3}
\ds\sum_{n=0}^{\infty}p_9(n)q^n=f^9_1.
\end{equation}
Substituting \eqref{4} into \eqref{3}, we have
\begin{equation}\label{5}
\ds\sum_{n=0}^{\infty}p_9(n)q^n=f^9_{49}\left(\frac{B(q^7)}{C(q^7)}-\frac{A(q^7)}{B(q^7)}q-q^2+\frac{C(q^7)}{A(q^7)}q^5\right)^9.
\end{equation}
If we extract those terms in which the power of $q$ is congruent to 4 modulo 7, divide by $q^4$ and replace $q^7$ by $q$, we have
\begin{align}\label{6}\nonumber
\ds\sum_{n=0}^{\infty}p_9(7n+4)q^n&=f^9_7\left(\left(\frac{36 B^7}{C^7}-\frac{252 A^2 B^4}{C^6}+\frac{126 A^4 B}{C^5}\right)\right.\\ \nonumber
&\qquad\left.\vphantom{\ds\frac{8q^3}{S}}+q \left(-\frac{36 A^7}{B^7}+\frac{756 A^5}{B^4 C}-\frac{3780 A^3}{B C^2}-\frac{756 B^5}{A C^4}+\frac{4284 A B^2}{C^3}\right)\right.\\ \nonumber
&\qquad\left.\vphantom{\ds\frac{8q^3}{S}}+q^2\left(-\frac{252 A^4 C^2}{B^6}+\frac{4284 A^2 C}{B^3}+\frac{3780 B^3}{A^2 C}-9745\right)\right.\\ \nonumber
&\qquad\left.\vphantom{\ds\frac{8q^3}{S}}+q^3\left(\frac{126 B^4 C}{A^5}-\frac{4284 B C^2}{A^3}-\frac{126 A C^4}{B^5}+\frac{3780 C^3}{A B^2}\right)\right.\\
&\qquad\left.\vphantom{\ds\frac{8q^3}{S}}+q^4\left(\frac{756 C^5}{A^4 B}-\frac{252 B^2 C^4}{A^6}\right)+\frac{36C^7}{A^7}q^5\right).
\end{align}
Rearrange the above equation, we have
\begin{align}\label{6.1}\nonumber
\ds\sum_{n=0}^{\infty}p_9(7n+4)q^n&=f^9_7\left(36 \left(\frac{B^7}{C^7}-\frac{A^7 q}{B^7}+\frac{C^7 q^5}{A^7}\right)-252 \left(\frac{A^2 B^4}{C^6}-\frac{ A^4 C^2 q^2}{B^6}-\frac{B^2 C^4 q^4}{A^6}\right)\right.\\ \nonumber
&\qquad\left.\vphantom{\ds\frac{8q^3}{S}}
+126 \left(\frac{A^4 B}{C^5}+\frac{ B^4 C q^3}{A^5}-\frac{A C^4 q^3}{B^5}\right)-756 q \left(\frac{B^5}{A C^4}-\frac{A^5}{B^4 C}-\frac{C^5 q^3}{A^4 B}\right)\right.\\\nonumber
&\qquad\left.\vphantom{\ds\frac{8q^3}{S}}
+4284 q \left(\frac{A B^2}{C^3}+\frac{ A^2 C q}{B^3}-\frac{ B C^2 q^2}{A^3}\right)-3780 q \left(\frac{A^3}{B C^2}-\frac{B^3 q}{A^2 C}-\frac{C^3 q^2}{A B^2}\right)\right.\\
&\qquad\left.\vphantom{\ds\frac{8q^3}{S}}
-9745 q^2
\right).
\end{align}
From above follow that
\begin{align}\label{7}\nonumber
&\ds\sum_{n=0}^{\infty}p_9(7n+4)q^n=f^9_7\left(36\left(\frac{B^7}{C^7}-\frac{A^7 q}{B^7}+\frac{C^7 q^5}{A^7}\right)
\right.\\ \nonumber
&\qquad\left.\vphantom{\ds\frac{8q^3}{S}}
-252 \left(\left(\frac{A B^2}{C^3}+\frac{A^2Cq}{B^3}-\frac{B C^2q^2}{A^3}\right)^2-2q\left(\frac{A^3}{B C^2}-\frac{B^3 q}{A^2 C}-\frac{C^3 q^2}{A B^2}\right)\right)\right.\\ \nonumber
&\qquad\left.\vphantom{\ds\frac{8q^3}{S}}+126 \left(q \left(\frac{B^5}{A C^4}-\frac{A^5}{B^4 C}-\frac{C^5 q^3}{A^4 B}\right)
+\left(\frac{A B^2}{C^3}+\frac{ A^2C q}{B^3}-\frac{ B C^2q^2}{A^3}\right) \left(\frac{A^3}{B C^2}-\frac{B^3 q}{A^2 C}-\frac{C^3 q^2}{A B^2}\right)\right)\right.\\ \nonumber
&\qquad\left.\vphantom{\ds\frac{8q^3}{S}}-756q\left(\frac{B^5}{A C^4}-\frac{A^5}{B^4 C}-\frac{C^5 q^3}{A^4 B}\right)+4284 q \left(\frac{A B^2}{C^3}+\frac{A^2 C q}{B^3}-\frac{B C^2 q^2}{A^3}\right)\right.\\
&\qquad\left.\vphantom{\ds\frac{8q^3}{S}}
-3780 q \left(\frac{A^3}{B C^2}-\frac{B^3 q}{A^2 C}-\frac{C^3 q^2}{A B^2}\right)-9367 q^2\right).
\end{align}
Substituting \eqref{8p}, \eqref{9p}, \eqref{10p} and \eqref{11p} into \eqref{7}, we have
\begin{align}\label{7.1}\nonumber
&\ds\sum_{n=0}^{\infty}p_9(7n+4)q^n=f^9_7\left(36\left(\frac{f_1^8}{f_7^8}+14\frac{ f_1^4 q}{f_7^4}+57 q^2\right)-252 \left(\left(\frac{f_1^4}{f_7^4}+8 q\right){}^2-2 q \left(\frac{f_1^4}{f_7^4}+5 q\right)\right)\right.\\ \nonumber
&\qquad\left.\vphantom{\ds\frac{8q^3}{S}}
+126 \left((3 q) q+\left(\frac{f_1^4}{f_7^4}+5 q\right) \left(\frac{f_1^4}{f_7^4}+8 q\right)\right)-756 q (3 q)+4284 q \left(\frac{f_1^4}{f_7^4}+8 q\right)
\right.\\
&\qquad\left.\vphantom{\ds\frac{8q^3}{S}}
-3780 q \left(\frac{f_1^4}{f_7^4}+5 q\right)-9367 q^2
\right).
\end{align}
Simplify  the equation \eqref{7.1}, we completed the Lemma \ref{1.1}.
\end{proof}
\section{Congruence for $(2,15)$-regular bipartition}
\begin{theorem}\label{d1}\normalfont For $n\geq0$,  we have
\begin{eqnarray}
B_{2,15}(9n+8)&\equiv&0\pmod5\label{b.1},\\
B_{2,15}(27n+14)&\equiv&0\pmod5\label{b.2},\\
B_{2,15}(27n+23)&\equiv&2B_{2,15}(3n+2)\pmod5\label{b.3}.
\end{eqnarray}
\end{theorem}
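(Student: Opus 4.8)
The plan is to begin with the generating function \eqref{0.1} in the case $(s,t)=(2,15)$, i.e. $\sum_{n\ge0}B_{2,15}(n)q^n=\frac{f_2f_{15}}{f_1^2}$, and to reduce it modulo $5$. Since $15=5\cdot3$, the binomial congruence \eqref{k1} gives $f_{15}\equiv f_3^5\pmod5$, so that
$$\sum_{n\ge0}B_{2,15}(n)q^n\equiv f_3^5\cdot\frac{f_2}{f_1^2}\pmod5.$$
I would substitute the $3$-dissection \eqref{a5} of $f_2/f_1^2$ and multiply through by $f_3^5$. Every surviving factor ($f_3,f_6,f_9,f_{18}$) is a function of $q^3$, so each of the three resulting terms lies in a single residue class modulo $3$, carrying the explicit factors $q^0,q^1,q^2$ respectively. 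Collecting the part whose $q$-exponent is $\equiv2\pmod3$ and replacing $q^3$ by $q$ then gives $\sum_{n\ge0}B_{2,15}(3n+2)q^n\equiv 4\frac{f_2^2f_6^3}{f_1}\pmod5$; this expression is the heart of the whole argument.

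Next I would dissect $4\frac{f_2^2f_6^3}{f_1}$ by inserting the identity \eqref{4w} for $f_2^2/f_1$ and multiplying by $f_6^3$. Once more each term is a function of $q^3$ times $q^0$ or $q^1$, so there is no term with exponent $\equiv2\pmod3$. This immediately forces $B_{2,15}(9n+8)\equiv0\pmod5$, which is \eqref{b.1}. Extracting instead the exponent $\equiv1\pmod3$ part and renormalising gives $\sum_{n\ge0}B_{2,15}(9n+5)q^n\equiv 4\frac{f_2^3f_6^2}{f_3}\pmod5$, the object needed for the final step.

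The third dissection hinges on splitting $f_2^3$. Replacing $q$ by $q^2$ in \eqref{3k} yields $f_2^3=f_6\,a(q^6)-3q^2f_{18}^3$, whose two summands occupy residue classes $0$ and $2$ modulo $3$ (there is no residue-$1$ term). Substituting into $4\frac{f_2^3f_6^2}{f_3}$, the vanishing of the residue-$1$ part gives $B_{2,15}(27n+14)\equiv0\pmod5$, which is \eqref{b.2}, while the residue-$2$ part yields $\sum_{n\ge0}B_{2,15}(27n+23)q^n\equiv -12\frac{f_2^2f_6^3}{f_1}\equiv 3\frac{f_2^2f_6^3}{f_1}\pmod5$. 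Comparing this with the expression $4\frac{f_2^2f_6^3}{f_1}$ found earlier for $B_{2,15}(3n+2)$, and using $3\cdot4^{-1}\equiv3\cdot4\equiv2\pmod5$, produces \eqref{b.3}.

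I expect none of the three dissections to be individually hard, since each is mechanical once the correct $3$-dissection identity is inserted; the real care lies in tracking residue classes and the scalar coefficients modulo $5$. The genuinely substantive feature is that the single expression $f_2^2f_6^3/f_1$ reappears (up to a unit modulo $5$) at both argument $3n+2$ and argument $27n+23$, which is precisely what makes the self-referential congruence \eqref{b.3} possible. Consequently the main obstacle is simply confirming that the constant factors and the $a(q^6)$ term land only in the residue classes claimed, since a single misplaced exponent or sign would destroy both the vanishing statements and the factor $2$.
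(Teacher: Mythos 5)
Your proposal is correct and follows essentially the same route as the paper: the same 3-dissection \eqref{a5} to reach $4f_2^2f_6^3/f_1$ at $3n+2$, the same identity \eqref{4w} to get \eqref{b.1} and the expression $4f_2^3f_6^2/f_3$ at $9n+5$, and the same splitting $f_2^3=f_6a(q^6)-3q^2f_{18}^3$ from \eqref{3k} to obtain \eqref{b.2} and the factor $3\cdot 4^{-1}\equiv 2\pmod 5$ in \eqref{b.3}. The only cosmetic difference is that you invoke the binomial congruence as $f_{15}\equiv f_3^5\pmod 5$ at the outset, whereas the paper carries $f_{15}$ through the dissection and reduces it (as $f_5\equiv f_1^5$) afterwards.
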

\begin{proof}
Setting $s=2$ and $t=15$ in \eqref{0.1}, we have
\begin{equation}\label{b1}
\sum_{n=0}^{\infty}B_{2,15}(n)q^n=\frac{f_2f_{15}}{f^2_1}.
\end{equation}
Substituting \eqref{a5} into \eqref{b1}, we have
\begin{equation}\label{b2}
\sum_{n=0}^{\infty}B_{2,15}(n)q^n=f_{15}\left(\frac{f^4_6f^6_9}{f^8_3f^3_{18}}+2q\frac{f^3_6f^3_9}{f^7_3}+4q^2\frac{f^2_6f^3_{18}}{f^6_3}\right).
\end{equation}
If we extract those terms in which the power of $q$ is congruent to 2 modulo 3, divide by $q^2$ and replace $q^3$ by $q$, we have
\begin{equation}\label{b3}
\sum_{n=0}^{\infty}B_{2,15}(3n+2)q^n\equiv4\frac{f^2_2f^3_6}{f_1}\pmod5.
\end{equation}
Substituting \eqref{4w} into \eqref{b3}, we have
\begin{equation}\label{b4}
\sum_{n=0}^{\infty}B_{2,15}(3n+2)q^n\equiv4f^3_6\left(\frac{f_6 f_9^2}{f_3 f_{18}}+q\frac{f_{18}^2}{f_9}\right)\pmod5.
\end{equation}
If we extract those terms in which the power of $q$ is congruent to 1 modulo 3, divide by $q$ and replace $q^3$ by $q$, we have
\begin{equation}\label{b5}
\sum_{n=0}^{\infty}B_{2,15}(9n+5)q^n\equiv4\frac{f^2_6f^3_2}{f_3}\pmod5.
\end{equation}
% Entry $1(\textrm{iv})$ on page $345$ of \cite{Berndt} is Ramanujan's cubic continued fraction
% \begin{align}\label{b5.1}
% f^3_1 &=f^3_9(u^{-1}-3q+4q^3u^2),
% \end{align}
% where
% \begin{equation}\nonumber
% u=\ds\frac{f_3f^3_{18}}{f_6f^3_9}.
% \end{equation}
Now replacing $q$ by $q^2$ in \eqref{3k}, we have
\begin{align}\label{b5.2}
f^3_2 &=f_{6}a(q^6)-3q^2f^3_{18},
\end{align}
Substituting \eqref{b5.2} in \eqref{b5} and extract those terms in which the power of $q$ is congruent to 2 modulo 3, divide by $q^2$ and replace $q^3$ by $q$, we have
\begin{equation}\label{b6}
\sum_{n=0}^{\infty}B_{2,15}(27n+23)q^n\equiv3\frac{f^2_2f^3_6}{f_1}\pmod5.
\end{equation}
This  completed  the  proof Theorem \ref{d1}, follow from  \eqref{b4}, \eqref{b6}.
Substituting \eqref{b5.2} in \eqref{b5} and extract those terms in which the power of $q$ is congruent to 1 modulo 3, divide by $q$ and replace $q^3$ by $q$, we have \eqref{b.2}.
\end{proof}
\subsection*{Proof Theorem \ref{y}}
Equation \eqref{y1} follow from \eqref{b.3} by the mathematical induction. Employing \eqref{b.1} and \eqref{b.2} in \eqref{y1} we obtain \eqref{y2} and \eqref{y3}.
\section{Congruence for $(7,11)$-regular bipartition}
\subsection*{Proof of Theorem \ref{a}}
Setting $s=7$ and $t=11$ in \eqref{0.1}, we have
\begin{eqnarray}\label{17}\nonumber
\sum_{n=0}^{\infty}B_{7,11}(n)q^n&\equiv&f_7f^9_1 \pmod{11},\\
&\equiv&f_7\ds\sum_{n=0}^\infty p_9(n)q^n\pmod{11}.
\end{eqnarray}
It follows that
\begin{equation}\label{17.2}
\sum_{n=0}^{\infty}B_{7,11}(7n+4)q^n\equiv f_1\ds\sum_{n=0}^{\infty}p_9(7n+4)q^n\pmod{11}.
\end{equation}
Now from \eqref{2} and \eqref{17.2}, we have
\begin{align}\label{18}\nonumber
\sum_{n=0}^{\infty}B_{7,11}(7n+4)q^n&\equiv f_1\left(9 f_1^8 f_7+9q f_1^4 f_7^5+8q^2 f_7^9\right),\\
&\equiv9 f_7 f_1^9+9q f_7^5 f_1^5+8q^2 f_7^9 f_1.
\end{align}
Substituting \eqref{13}, \eqref{4} and \eqref{3} into \eqref{18}, we have
\begin{align}\label{19}\nonumber
\sum_{n=0}^{\infty}B_{7,11}(7n+4)q^n&\equiv9 f_7\ds\sum_{n=0}^{\infty}p_9(n)q^n+9 f_7^5 \ds\sum_{n=0}^{\infty}p_5(n)q^{n+1}\\
&+8q^2 f_7^9 f_{49}\left(\frac{B(q^7)}{C(q^7)}-\frac{A(q^7)}{B(q^7)}q-q^2+\frac{C(q^7)}{A(q^7)}q^5\right).
\end{align}
% If we extract those terms in which the power of $q$ is congruent to 4 modulo 7, divide by $q^4$ and replace $q^7$ by $q$, we have
It follows that
\begin{equation}\label{20}
\sum_{n=0}^{\infty}B_{7,11}(49n+32)q^n\equiv9 f_1\ds\sum_{n=0}^{\infty}p_9(7n+4)q^n+9 f_1^5 \ds\sum_{n=0}^{\infty}p_5(7n+3)q^{n}
+3 f_1^9 f_{7}.
\end{equation}
Substituting Lemma \ref{1.2} and Lemma \ref{1.1} into \eqref{20}, we have
\begin{align}\label{21}\nonumber
\sum_{n=0}^{\infty}B_{7,11}(49n+32)q^n&\equiv9f_1\left(9 f_1^8 f_7+9qf_1^4 f_7^5+8q^2 f_7^9\right)+9 f_1^5\left(10 f_1^4 f_7+5q f_7^5\right)+3f_1^9 f_7,\\
&\equiv9f_7 f_1^9+5qf_7^5 f_1^5+6q^2f_7^9 f_1.
\end{align}
Similarly, we find
\begin{eqnarray}\nonumber
\sum_{n=0}^{\infty}B_{7,11}(343n+228)q^n&\equiv&4f_7 f_1^9+7qf_7^5 f_1^5+6q^2f_7^9 f_1,\\\nonumber
\sum_{n=0}^{\infty}B_{7,11}(2401n+1600)q^n&\equiv&f_7 f_1^9+5qf_7^5 f_1^5+10q^2f_7^9 f_1,\\\nonumber
\sum_{n=0}^{\infty}B_{7,11}(16807n+11204)q^n&\equiv&5f_7 f_1^9+qf_7^5 f_1^5+8q^2f_7^9 f_1,\\\nonumber
\sum_{n=0}^{\infty}B_{7,11}(117649n+78432)q^n&\equiv&3f_7 f_1^9+6qf_7^5 f_1^5+7q^2f_7^9 f_1,\\\nonumber
\sum_{n=0}^{\infty}B_{7,11}(823543n+549028)q^n&\equiv&3f_7 f_1^9+2qf_7^5 f_1^5+2q^2f_7^9 f_1,\\\nonumber
\sum_{n=0}^{\infty}B_{7,11}(5764801n+3843200)q^n&\equiv&f_7 f_1^9+4qf_7^5 f_1^5+2q^2f_7^9 f_1,\\\nonumber
\sum_{n=0}^{\infty}B_{7,11}(40353607n+26902404)q^n&\equiv&3f_7 f_1^9+7qf_7^5 f_1^5+8q^2f_7^9 f_1,\\\nonumber
\sum_{n=0}^{\infty}B_{7,11}(282475249n+188316832)q^n&\equiv&f_7 f_1^9+7qf_7^5 f_1^5+2q^2f_7^9 f_1,\\\label{22}
\sum_{n=0}^{\infty}B_{7,11}(1977326743n+1318217828)q^n&\equiv&8q^2f_7^9 f_1.
\end{eqnarray}
Substituting \eqref{4} into \eqref{22}, we have 
\begin{equation}
 \sum_{n=0}^{\infty}B_{7,11}\left(7^{11}n+\frac{2(7^{11}-1)}{3}\right)q^n\equiv8q^2f_7^9 f_{49}\left(\frac{B(q^7)}{C(q^7)}-\frac{A(q^7)}{B(q^7)}q-q^2+\frac{C(q^7)}{A(q^7)}q^5\right).
\end{equation}
There are no terms on the right of $q^{7n+1}$, $q^{7n+5}$ and $q^{7n+6}$, so
\begin{eqnarray}\label{a4}
B_{7,11}\left(7^{11}(7n+k)+\frac{2(7^{11}-1)}{3}\right)&\equiv&0 \pmod{11},~\textrm{where}~k=1,5,6.\label{a4}
\end{eqnarray}
and 
\begin{equation}\label{a3}
B_{7,11}\left(7^{12}n+\frac{2(7^{12}-1)}{3}\right)\equiv3B_{7,11}(n)\pmod{11}.
\end{equation}
Equation \eqref{a1} follow from \eqref{a3} by mathematical induction. Employing \eqref{a1} in \eqref{a4}, we obtain \eqref{a2}.
\section{Congruence for $(27,11)$-regular bipartition}
\subsection*{Proof of Theorem \ref{b}}
Setting $s=27$ and $t=11$, we have
\begin{eqnarray}\nonumber
\sum_{n=0}^{\infty}B_{27,11}(n)q^n&=&\frac{f_{27}f_{11}}{f^2_1},\\\label{1}
 &\equiv&f_{27}f^9_1\pmod{11}.
 \end{eqnarray}
Substituting \eqref{3k} into \eqref{1}, we have
\begin{eqnarray}\nonumber
\sum_{n=0}^{\infty}B_{27,11}(n)q^n&\equiv&f_{27} \left(a(q^3) f_3-3 f_9^3 q\right)^3,\\\nonumber
&\equiv&a(q^3)^3 f_3^3 f_{27}+2q a(q^3)^2 f_3^2 f_{27} f_9^3 +5q^2a(q^3) f_3 f_{27} f_9^6 +6q^3f_{27} f_9^9.
\end{eqnarray}
It follows that
\begin{eqnarray}\label{9}
\sum_{n=0}^{\infty}B_{27,11}(3n)q^n&\equiv&a(q)^3 f_1^3 f_{9}+6qf_{9} f_3^9.
\end{eqnarray}
Substituting \eqref{3k} and \eqref{2k} into \eqref{9}, we have
\begin{eqnarray}\nonumber
\sum_{n=0}^{\infty}B_{27,11}(3n)q^n&\equiv&f_{9}\left(a(q^3)+6q\frac{f^3_9}{f_3}\right)^3\left(f_3a(q^3)-3q f^3_9\right)+6qf_{9} f_3^9,\\\nonumber
&\equiv&a(q^3)^4 f_3 f_9+6q f_3^9 f_9+4q a(q^3)^3 f_9^4+10q^2\frac{ a(q^3)^2 f_9^7}{f_3}+2q^3\frac{a(q^3) f_9^{10}}{f_3^2}+q^4\frac{f_9^{13}}{f_3^3}.
\end{eqnarray}
It follows that
\begin{eqnarray}\label{10}
\sum_{n=0}^{\infty}B_{27,11}(9n+3)q^n&\equiv&6f_1^9 f_3+4a(q)^3 f_3^4+q\frac{f_3^{13}}{f_1^3}.
\end{eqnarray}
Substituting \eqref{3k}, \eqref{2k} and \eqref{6k} into \eqref{10}, we have
\begin{eqnarray}\nonumber
\sum_{n=0}^{\infty}B_{27,11}(9n+3)q^n&\equiv&6f_3\left(f_3a(q^3)-3q f^3_9\right)^3+4f_3^4\left(a(q^3)+6q\frac{f^3_9}{f_3}\right)^3\\\nonumber
&&+qf_3^{13}\left(a^2(q^3)\frac{f^3_9}{f^{10}_3}+3qa(q^3)\frac{f^6_9}{f^{11}_3}+9q^2\frac{f^9_9}{f^{12}_3}\right),\\
&\equiv&10 a(q^3)^3 f_3^4+8q a(q^3)^2 f_3^3 f_9^3 +3q^2 a(q^3) f_3^2 f_9^6+7q^3 f_3 f_9^9.
\end{eqnarray}
It follows that
\begin{eqnarray}\label{10k}
\sum_{n=0}^{\infty}B_{27,11}(27n+12)q^n&\equiv&8a(q)^2 f_1^3 f_3^3.
\end{eqnarray}
Substituting \eqref{3k} and \eqref{2k} into \eqref{10k}, we have
\begin{eqnarray}\nonumber
\sum_{n=0}^{\infty}B_{27,11}(27n+12)q^n&\equiv&8f_3^3\left(a(q^3)+6q\frac{f^3_9}{f_3}\right)^2\left(f_3a(q^3)-3q f^3_9\right),\\
&\equiv&8 a(q^3)^3 f_3^4+6qa(q^3)^2 f_3^3 f_9^3 +5q^3 f_3 f_9^9.
\end{eqnarray}
It follows that
\begin{equation} \label{11}
B_{27,11}(81n+66)\equiv0\pmod{11}
\end{equation}
and
\begin{equation}\label{12}
B_{27,11}(81n+39)\equiv9B_{27,11}(27n+12)\pmod{11}.
\end{equation}
This completes the proof of Theorem \ref{b} follow from \eqref{11} and  \eqref{12}.
\section{Congruence for $(243,17)$-regular bipartition}
\subsection*{Proof of Theorem \ref{c}}
Setting $s=243$ and $t=17$, we have
\begin{eqnarray}\nonumber
\sum_{n=0}^{\infty}B_{243,17}(n)q^n&=&\frac{f_{243}f_{17}}{f^2_1},\\\label{g1}
 &\equiv&f_{243}f^{15}_1\pmod{17}.
\end{eqnarray}
Substituting \eqref{3k} into \eqref{g1}, we have
\begin{eqnarray}\nonumber
\sum_{n=0}^{\infty}B_{243,17}(n)q^n&\equiv&f_{243} \left(a(q^3) f_3-3 f_9^3 q\right)^5,\\\nonumber
&\equiv&a(q^3)^5 f_3^5 f_{243}+2q a(q^3)^4 f_3^4 f_9^3 f_{243}+5q^2 a(q^3)^3 f_3^3 f_9^6 f_{243}+2q^3 a(q^3)^2 f_3^2 f_9^9 f_{243}\\\nonumber
&&+14q^4a(q^3) f_3 f_9^{12} f_{243}+12q^5 f_9^{15} f_{243}.
\end{eqnarray}
It follows that
\begin{equation}\label{g2}
\sum_{n=0}^{\infty}B_{243,17}(3n+2)q^n\equiv5a(q)^3 f_1^3 f_3^6f_{81} +12q f_3^{15}f_{81}.
\end{equation}
Substituting \eqref{3k} and \eqref{2k} into \eqref{g2}, we have
\begin{eqnarray}\nonumber
\sum_{n=0}^{\infty}B_{243,17}(3n+2)q^n&\equiv&5f_3^6f_{81}\left(a(q^3)+6q\frac{f^3_9}{f_3}\right)^3\left(f_3a(q^3)-3q f^3_9\right) +12q f_3^{15}f_{81}\\\nonumber
&\equiv&5 a(q^3)^4 f_3^7 f_{81}+12qf_3^{15} f_{81}+7q a(q^3)^3 f_3^6 f_9^3 f_{81}+15q^2 a(q^3)^2 f_3^5 f_9^6 f_{81}\\\nonumber
&&+4q^3a(q^3) f_3^4 f_9^9 f_{81}+7q^4 f_3^3 f_9^{12} f_{81}.
\end{eqnarray}
If we extract those terms in which the power of $q$ is congruent to 1 modulo 3, divide by $q$ and replace $q^3$ by $q$, we have
\begin{equation}\label{g3}
\sum_{n=0}^{\infty}B_{243,17}(9n+5)q^n\equiv12 f_1^{15} f_{27} +7 a(q)^3 f_1^6 f_3^3 f_{27} +7q f_1^3 f_3^{12} f_{27}.
\end{equation}
Substituting \eqref{3k} and \eqref{2k} into \eqref{g3}, we have
\begin{eqnarray}\nonumber
\sum_{n=0}^{\infty}B_{243,17}(9n+5)q^n&\equiv&12 \left(a(q^3) f_3-3 f_9^3 q\right)^5 f_{27} +7 \left(a(q^3)+6q\frac{f^3_9}{f_3}\right)^3 \left(a(q^3) f_3-3 f_9^3 q\right)^2 f_3^3 f_{27}\\\nonumber
&&+7q \left(a(q^3) f_3-3 f_9^3 q\right) f_3^{12} f_{27},\\\nonumber
&\equiv&2 a(q^3)^5 f_3^5 f_{27}+7q a(q^3) f_3^{13} f_{27}+6q a(q^3)^4 f_3^4 f_9^3 f_{27}+13q^2 f_3^{12} f_9^3 f_{27}\\\nonumber
&&+4q^2 a(q^3)^3 f_3^3 f_9^6 f_{27}+4q^3 a(q^3)^2 f_3^2 f_9^9 f_{27}+8q^4 a(q^3) f_3 f_9^{12} f_{27}+16q^5 f_9^{15} f_{27}.
\end{eqnarray}
If we extract those terms in which the power of $q$ is congruent to 2 modulo 3, divide by $q$ and replace $q^3$ by $q$, we have
\begin{eqnarray}\label{g4}
 \sum_{n=0}^{\infty}B_{243,17}(27n+23)q^n&\equiv&13 f_1^{12} f_3^3 f_9+4 a(q)^3 f_1^3 f_3^6 f_9+16q f_3^{15} f_9.
\end{eqnarray}
Substituting \eqref{3k} and \eqref{2k} into \eqref{g4}, we have
\begin{eqnarray}\nonumber
\sum_{n=0}^{\infty}B_{243,17}(27n+23)q^n&\equiv&13 \left(a(q^3) f_3-3 f_9^3 q\right)^4 f_3^3 f_9+4 \left(a(q^3)+6q\frac{f^3_9}{f_3}\right)^3 \left(a(q^3) f_3-3 f_9^3 q\right) f_3^6 f_9\\\nonumber
&&+16q f_3^{15} f_9,\\\label{g5}
&\equiv&16q f_3^{15} f_9+6q a(q^3)^3 f_3^6 f_9^4+8q^4 f_3^3 f_9^{13}.
\end{eqnarray}
This completes the proof of Theorem \ref{c} follow from \eqref{g5}.

\end{document}